\setlist[itemize,1]{leftmargin=15pt}
\setlist[enumerate,1]{leftmargin=15pt}
\newtheorem{thm}{Theorem}[section]
\newtheorem{prop}[thm]{Proposition}
\newtheorem{lemma}[thm]{Lemma}
\newtheorem{cor}[thm]{Corollary}
\newtheorem{rmk}[thm]{Remark}
\title{On degree three curves in $C^{(2)}$ with positive self-intersection}
\author{Meritxell S\'aez}
\date{}
\begin{document}
\bibstyle{plain}

\maketitle

\begin{abstract}
In this paper we study degree three curves in $C^{(2)}$ with positive self-intersection defined by the action of a spherical triangular group in an auxiliary curve.
{\parskip7pt

\noindent \textbf{2010 Mathematics Subject Classification}.Primary 14H45; Secondary 14J25, 14H37, 14H10.\\
\parskip7pt
\noindent \textbf{Keywords}: Symmetric product, curve, irregular surface, curves in surfaces.}
\let\thefootnote\relax\footnote{The author has been partially supported by the Proyecto de Investigaci\'on MTM2012-38122-C03-02.}
\end{abstract}

\section{Introduction}

In this paper we study degree three curves in $C^{(2)}$, that is, curves $\tilde{B}\subset C^{(2)}$ with $\tilde{B}\cdot C_P=3$. We continue the work we began in \cite{MS2} and \cite{MS3} on the study of curves in $C^{(2)}$ with special attention to those with positive self-intersection.

A fundamental tool for this study is the main theorem in \cite{MS2} where curves in $C^{(2)}$ with irreducible preimage in $C\times C$ and degree $d$ are characterized. In \cite{MS3} we used this result to completely classify degree two curves with positive self-intersection.

For $d=3$ the main result in \cite{MS2} translates into

\begin{thm}\label{charcurv}
Let $B$ be an irreducible smooth curve such that there are no non-trivial morphisms $B\rightarrow C$. A morphism of degree one from the curve $B$ to the surface $C^{(2)}$ exists, with image $\tilde{B}$ of degree $3$ if, and only if, there exists a smooth irreducible curve $D$ and a diagram 
\[
\xymatrix{
D \ar[d]_{(3:1)}\ar[r]^{(2:1)}& B\\
C &
}
\]
which does not complete.
\end{thm}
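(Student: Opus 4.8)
The plan is to derive the statement from the general characterisation of degree-$d$ curves in $C^{(2)}$ with irreducible preimage in $C\times C$ proved in \cite{MS2}, specialising all the data to $d=3$. Write $\pi\colon C\times C\to C^{(2)}$ for the quotient by the swap involution $\sigma$; it is a double cover branched along the diagonal, and a curve $\tilde B\subset C^{(2)}$ can lie in the branch divisor only if $\tilde B\cdot C_P=1$. The first point to establish is that, under the hypothesis that $B$ admits no non-constant morphism to $C$, the preimage $\hat B:=\pi^{-1}(\tilde B)$ of a degree-$3$ image of a degree-one map $B\to C^{(2)}$ is necessarily irreducible: if it split as $\hat B_1\cup\sigma(\hat B_1)$, then (since $f$ has degree one) $B$ would be the normalisation of $\hat B_1\subset C\times C$ and hence would carry the two projections to $C$; one of them is non-constant unless $\hat B_1$ is a fibre of a projection, in which case $\tilde B=C_P$ has degree $1$ --- either way a contradiction. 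Thus $\tilde B$ is exactly of the type treated in \cite{MS2}, and it remains to read off what that result gives for $d=3$.

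For the ``if'' direction, suppose we are given $D$ with $g\colon D\to C$ of degree $3$ and $\rho\colon D\to B$ of degree $2$, and that the diagram does not complete. Being of degree $2$, the cover $\rho$ is Galois; let $\tau$ be its involution. I would form $\Phi\colon D\to C\times C$, $d\mapsto(g(d),g(\tau d))$, which is $\sigma$-equivariant and therefore descends to $f\colon B\to C^{(2)}$, $b\mapsto\{g(d),g(\tau d)\}$ for $d$ over $b$; set $\tilde B:=f(B)$, whose preimage under $\pi$ is $\overline{\Phi(D)}$. Counting the fibre of $f$ over a general $C_P$: it consists of the images in $B$ of $g^{-1}(P)\cup\tau(g^{-1}(P))$, which --- because $g\circ\tau\neq g$ on every component (otherwise $g$ factors through $B$, i.e.\ the diagram completes in the trivial way) --- is a set of three points of $B$, so that $\tilde B\cdot C_P=3/\deg f$. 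Since $3$ is prime, $\deg f$ is $1$ or $3$; and $\deg f=3$ happens precisely when $f$ factors through a curve $E$ with $C\to E$ of degree $2$, i.e.\ exactly when the diagram completes. Hence the non-completing hypothesis forces $\deg f=1$ and $\tilde B\cdot C_P=3$, as wanted.

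For the ``only if'' direction, start from a degree-one $f\colon B\to C^{(2)}$ with $\tilde B$ of degree $3$. By the first paragraph $\hat B=\pi^{-1}(\tilde B)$ is irreducible; let $D$ be its normalisation. The involution $\sigma$ lifts to an involution $\tau$ of $D$, and since $\deg f=1$ the curve $B$ is the normalisation of $\tilde B$, so $D/\tau=B$ and $\rho\colon D\to B$ has degree $2$. Pulling back the first projection $p_1\colon\hat B\to C$ gives $g\colon D\to C$ whose generic fibre has $\#\{y:\{P,y\}\in\tilde B\}=\tilde B\cdot C_P=3$ points, so $g$ has degree $3$. It remains to see that this diagram does not complete: a completion would furnish a curve $E$ with $B\to E$ of degree $3$, $C\to E$ of degree $2$ and $D$ dominating $B\times_E C$ compatibly, which forces $f$ to factor through $B\to E$ and hence $\deg(B\to\tilde B)\ge 3$, contradicting $\deg f=1$. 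This exhibits the required diagram and completes the translation.

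The step I expect to cost the most work is the precise equivalence ``the diagram does not complete $\iff$ $\tilde B$ has degree $3$ (equivalently $\deg f=1$)''. This needs a clean definition of what it means for the diagram $D\to B$, $D\to C$ to \emph{complete} --- namely the existence of a commutative square over a curve $E$ in which $C\to E$ and $B\to E$ carry the degrees $2$ and $3$ and $D$ dominates the fibre product --- together with a careful analysis of the degenerate configurations: where $g^{-1}(P)$ meets $\tau(g^{-1}(P))$, where $\hat B$ meets the diagonal of $C\times C$, or where $\tilde B$ could a priori have degree $1$. One must check that these are exactly the completing cases, so that nothing is lost in the ``if'' construction and no spurious curves are produced; this bookkeeping is what \cite{MS2} carries out in general, and here the only genuinely new ingredients are that $d=3$ is prime --- which removes all intermediate-degree possibilities for $B\to\tilde B$ --- and that this is also what makes the hypothesis ``no morphism $B\to C$'' enough to pin down the shape of the diagram.
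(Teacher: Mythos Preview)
Your proposal is correct and follows exactly the approach the paper indicates: Theorem~\ref{charcurv} is not proved independently there but simply recorded as the $d=3$ instance of the main theorem of \cite{MS2}. You go further by actually unpacking that specialisation, including the irreducibility of $\pi_C^{-1}(\tilde B)$ --- which the paper isolates separately as Proposition~\ref{trirred} and proves using only $g(C)\geq 2$ rather than the no-morphism hypothesis you invoke --- and the argument you sketch is sound.
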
 

That is, that does not exist a curve $H$ and maps such that we obtain a commutative diagram
\[
\xymatrix{
D \ar[d]_{(3:1)}\ar[r]^{(2:1)}& B \ar@{.>}[d]^{(3:1)}\\
C \ar@{.>}[r]_{(2:1)}& H.
}
\]

In \cite[Question 8.6]{MPP2} the authors wonder if there exists a curve $B$ in a surface $S$ with $q(S)<p_a(B)<2q(S)-1$ (the Brill-Noether range) and $B^2>0$. This question relates also with the existence of a curve of genus $q<p_a(C)<2q-1$ that generate an abelian variety of dimension $q$ (see \cite{Pi2}). In \cite{MS3} we saw that for large $g(C)$ it is more likely that such a curve has low degree, hence motivating the study of low degree curves in the symmetric square. In \cite{MS3} we studied the degree two case in detail. In this paper we consider some degree three cases. We find no further examples of curves with positive self-intersection and arithmetic genus in the Brill-Noether range, even when considering their preimages in $C\times C$, as was the case in \cite{MS3}.

First, we prove that the preimage of $\tilde{B}$  by $\pi_C:C\times C \rightarrow C^{(2)}$ is always irreducible:

\begin{prop}\label{trirred}
Let $\tilde{B}\subset C^{(2)}$ be a degree $3$ curve. Then $\pi_C^*(\tilde{B})$ is irreducible.
\end{prop}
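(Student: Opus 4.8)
The plan is to argue by contradiction: I would assume $\pi_C^*(\tilde B)$ is reducible and extract from that a morphism $C\to C$ of degree $2$, which a smooth curve of genus $\ge 2$ cannot admit by Riemann--Hurwitz. So the proof rests on the parity of the degree $3$ together with the symmetry forced by the involution $\sigma$ of $C\times C$.

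First I would set up the structure of the pulled-back divisor. The map $\pi_C\colon C\times C\to C^{(2)}$ is a double cover whose branch locus is the reduced irreducible diagonal $\Delta$, and $\Delta\cdot C_P=1$. Since $\tilde B$ is irreducible of degree $3\neq 1$ it cannot be contained in $\Delta$, so $\pi_C^*(\tilde B)$ is reduced, and it is either irreducible or of the form $B_1+B_2$ with $B_1\neq B_2$ irreducible curves interchanged by $\sigma$. Suppose the latter; then $\pi_C$ restricts on each $B_i$ to a degree-$1$, hence birational, morphism onto $\tilde B$.

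The heart of the argument is a short intersection count with the two rulings $F_1=\{P\}\times C$ and $F_2=C\times\{P\}$, for which $\pi_C^*(C_P)=F_1+F_2$. Because $\sigma$ exchanges $B_1\leftrightarrow B_2$ and $F_1\leftrightarrow F_2$, one gets $B_1\cdot F_1+B_1\cdot F_2=\tfrac12\,(B_1+B_2)\cdot\pi_C^*(C_P)=\tfrac12\deg(\pi_C)\,(\tilde B\cdot C_P)=3$. The integers $B_1\cdot F_1$ and $B_1\cdot F_2$ are the degrees of the two projections restricted to $B_1$, and neither can be $0$: otherwise $B_1$ would be an entire ruling and $\tilde B=\pi_C(B_1)=C_P$ would have degree $1$. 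Hence these two degrees are $1$ and $2$ in some order, and since $\sigma$ swaps the two projections I may relabel $B_1,B_2$ so that $p_1|_{B_1}\colon B_1\to C$ has degree $1$. A finite birational morphism onto the smooth (hence normal) curve $C$ is an isomorphism, so $B_1\cong C$ and $B_1$ is the graph of $f:=p_2\circ(p_1|_{B_1})^{-1}\colon C\to C$, with $\deg f=B_1\cdot F_2=2$. Riemann--Hurwitz for $f$ then forces $2g(C)-2\ge 2\bigl(2g(C)-2\bigr)$, i.e. $g(C)\le 1$, contradicting $g(C)\ge 2$; therefore $\pi_C^*(\tilde B)$ is irreducible.

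I expect the only points needing genuine care to be the structural claims about $\pi_C^*(\tilde B)$ --- that it is reduced and, when reducible, consists of exactly two $\sigma$-conjugate components, each mapping birationally onto $\tilde B$ --- which are the standard facts about pulling a curve back under a double cover along a curve not contained in the branch divisor. Once those are in place the rest is the elementary degree bookkeeping above, whose punchline is that $3=1+2$ is the only admissible splitting of the projection degrees, and the appearance of the $1$ is exactly what produces the forbidden degree-$2$ self-map of $C$.
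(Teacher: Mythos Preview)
Your proof is correct and follows essentially the same strategy as the paper: assume reducibility, split the degree-$3$ projection as $1+2$, and obtain a forbidden degree-$2$ self-map of $C$. You are more explicit than the paper about the intersection bookkeeping (the computation $B_1\cdot F_1+B_1\cdot F_2=3$) and about invoking Riemann--Hurwitz for the final contradiction, whereas the paper routes the argument through the common normalization $B$ of the two components and simply states that a degree-$2$ map $C\to C$ is impossible for $g(C)\ge 2$; but the underlying idea is identical.
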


Therefore, from Theorem \ref{charcurv} we deduce that all curves of degree $3$ are defined by a diagram of curves that does not complete. Since not all degree $3$ morphisms are Galois, not all diagrams come from the action of a group in a curve $D$ as happened in the degree two case. We are going to study some special cases in the Galois situation, specifically those where $D$ is a curve with two automorphisms: $i$ of order $2$ and $\alpha$ of order $3$ such that $\langle i, \alpha \rangle = S_4, A_4\ \mathrm{or}\ A_5$, the so called spherical triangle groups, because of their simple and well known structure. The groups $S_3$ and $D_3$ are also spherical triangle groups, nevertheless, since they are such that $|D_3|=|S_3|=3\cdot 2$, the diagram obtained would complete (see \cite[Proposition 1.2]{MS3}). Moreover, we observe that since there are an infinity of groups of finite order generated by an involution and an order three element (see \cite{Mi2}), a complete study even only of the Galois case presents a great complexity using this approach. 

Given an automorphism $\beta$ we denote by $\nu(\beta)$ the number of points fixed by $\beta$. We find that

\begin{thm}\label{summarytr}
Let $D$ be a curve with the action of two automorphisms: $i$ of order two and $\alpha$ of order three. Assume that $\langle i, \alpha \rangle=A_4, S_4\ \textrm{or}\ A_5$. Let $C=D/\langle \alpha \rangle$ and $B=D/\langle i \rangle$. Then, there exists a curve $\tilde{B}\subset C^{(2)}$ of degree three, with normalization $B$ (as in Theorem \ref{charcurv}).

Moreover, if we denote $\pi_C^*(\tilde{B})=:\tilde{D}$, then $\tilde{D}$ has normalization $D$, $\tilde{B}$ has $\frac{1}{2}\nu(i\alpha^2i\alpha)+\frac{1}{2}(\nu((i\alpha)^2)-\nu(i\alpha))$ nodal singularities and $\tilde{D}$ has $\nu(i\alpha^2i\alpha)+\nu((i\alpha)^2)$ nodal singularities.    
\end{thm}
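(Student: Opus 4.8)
The plan is to construct $\tilde{B}$ and $\tilde{D}$ by hand from the action of $G:=\langle i,\alpha\rangle$ on $D$, identify their normalisations through a degree count, and then count their singularities by tracking which pairs of points of $D$ become glued.

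I would begin by writing $\phi_C\colon D\to C$ and $\phi_B\colon D\to B$ for the quotient maps and defining $\psi\colon B\to C^{(2)}$ by $\psi([d])=\{\phi_C(d),\phi_C(i(d))\}$; this is well defined since $i^{2}=\mathrm{id}$, and counting $\psi^{-1}(C_P)$ for generic $P\in C$ (six points of $D$ over three points of $B$) gives $\psi(B)\cdot C_P=3$, so $\tilde{B}:=\psi(B)$ is a degree three curve. Its lift $f\colon D\to C\times C$, $f(d)=(\phi_C(d),\phi_C(i(d)))$, has image $\pi_C^{*}(\tilde{B})=\tilde{D}$, which is irreducible by Proposition~\ref{trirred}. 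Since the first projection $\tilde{D}\to C$ has degree $\tilde{B}\cdot C_P=3=\deg\phi_C$, the map $f$ is birational onto $\tilde{D}$, so $D$ is the normalisation of $\tilde{D}$; likewise $\psi$ is birational and $B$ is the normalisation of $\tilde{B}$. To land in the setting of Theorem~\ref{charcurv} I would check that the diagram does not complete: a completion would produce a curve $H$ with $k(H)\subseteq k(C)\cap k(B)$, and since $k(C)=k(D)^{\langle\alpha\rangle}$ and $k(B)=k(D)^{\langle i\rangle}$ lie in the Galois extension $k(D)/k(D)^{G}$ one has $k(C)\cap k(B)=k(D)^{G}$, whence $|G|\le[k(D):k(H)]=3\cdot 2=6$, impossible since $|A_4|=12$, $|S_4|=24$, $|A_5|=60$; one also checks, from the structure of $G$, that there is no nonconstant morphism $B\to C$.

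For the singularities of $\tilde{D}$, I would first note that no point of $D$ is fixed by both $\alpha$ and $i\alpha i$, since otherwise its stabiliser would contain the non-cyclic group $\langle\alpha,i\alpha i\rangle$, which cannot stabilise a point on a curve; granting this, $f$ is an immersion, so the singularities of $\tilde{D}$ are exactly ordinary nodes coming from unordered pairs $\{d_1,d_2\}$, $d_1\neq d_2$, with $f(d_1)=f(d_2)$, i.e. $d_2\in\langle\alpha\rangle d_1\cap\langle i\alpha i\rangle d_1\setminus\{d_1\}$. Writing $\gamma d_1=\delta d_1$ with $\gamma\in\{\alpha,\alpha^{2}\}$ and $\delta\in\{i\alpha i,i\alpha^{2}i\}$, the four possibilities say exactly that $d_1$ is fixed by $i\alpha^{2}i\alpha$ or by $(i\alpha)^{2}$, or by a conjugate thereof; conversely each $d_1\in\mathrm{Fix}(i\alpha^{2}i\alpha)$, resp. $d_1\in\mathrm{Fix}((i\alpha)^{2})$, yields the node $\{d_1,\alpha d_1\}$, and $d_1\mapsto\{d_1,\alpha d_1\}$ is a bijection onto the nodes of that type since $d_1$ is recovered from the pair as the member fixed by the relevant element. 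This gives $\nu(i\alpha^{2}i\alpha)+\nu((i\alpha)^{2})$ nodes on $\tilde{D}$, after discarding the finitely many coincidences at points with larger stabiliser; transversality of the two branches at each node follows by comparing the differentials of $\phi_C\circ i$ and $\phi_C\circ i\circ\alpha$ at $d_1$, which differ because $i\alpha i\notin\langle\alpha\rangle$.

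Then I would descend along $\pi_C$. The factor-swap $\sigma$ on $C\times C$ corresponds to $i$ on $D$, with $\tilde{D}/\sigma=\tilde{B}$. A node $\{d_1,\alpha d_1\}$ lies over the diagonal of $C^{(2)}$ iff $\phi_C(d_1)=\phi_C(i d_1)$, and among the nodes above this happens precisely for the $(i\alpha)^{2}$-type ones with $d_1\in\mathrm{Fix}(i\alpha)$ (where $\alpha d_1=i d_1$), of which there are $\nu(i\alpha)$; each such node is $\sigma$-fixed and its two preimages $d_1,i d_1$ map to a single point of $B$, so it projects to a smooth point of $\tilde{B}$. The remaining $\nu(i\alpha^{2}i\alpha)+\nu((i\alpha)^{2})-\nu(i\alpha)$ nodes of $\tilde{D}$ lie off the diagonal, occur in $\sigma$-orbits of size two, and project two-to-one onto the nodes of $\tilde{B}$; hence $\tilde{B}$ has $\frac{1}{2}\bigl(\nu(i\alpha^{2}i\alpha)+\nu((i\alpha)^{2})-\nu(i\alpha)\bigr)=\frac{1}{2}\nu(i\alpha^{2}i\alpha)+\frac{1}{2}\bigl(\nu((i\alpha)^{2})-\nu(i\alpha)\bigr)$ nodes. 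The main obstacle throughout is making this dictionary between singular points and fixed points of group elements rigorous: one must control the coincidences at points with larger stabiliser (the ramifications of $\phi_C$ and $\phi_B$, and the points over the diagonal), verify that every surviving singular point is an ordinary node via the local linearised action of $G$, and check that the bookkeeping is uniform across $G=A_4,S_4,A_5$---the order of $i\alpha$ differs in the three cases, which is exactly why the correction term $\nu(i\alpha)$ appears and may vanish.
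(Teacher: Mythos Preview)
Your outline matches the paper's proof: it realises $\tilde D$ as the image of the graph $\Gamma_i\subset D\times D$ under $g\times g$ (your $f$ is precisely $(g\times g)|_{\Gamma_i}$), sorts the double points into the same two cases (the paper's Cases~A and~B in Proposition~\ref{singBtr}, corresponding to your $i\alpha^{2}i\alpha$- and $(i\alpha)^{2}$-types), and treats the $\sigma$-fixed diagonal nodes exactly as in Lemma~\ref{trsmooth}. The paper obtains transversality of the branches a bit more cleanly by noting that $(g\times g)^{-1}(\tilde D)$ is a union of graphs of automorphisms, pairwise transverse by Corollary~\ref{grtrans}, with $g\times g$ \'etale at the points in question; it also checks separately (end of the proof of Proposition~\ref{singBtr}) that no further singularities of $\tilde B$ appear over the ramification of $g^{(2)}$, a step your descent-by-$\sigma$ argument leaves implicit.
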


We analyse next which of these curves $\tilde{B}$ have positive self-intersection in Section \ref{deg3higher}. 

In particular, the curves given by the action of $A_4$ are described in Tables 
\ref{A4t3} 
 and \ref{A4t10}. The curves given by the action of $S_4$ are described in Tables \ref{S4t1} and \ref{S4t2} and the curves given by the action of $A_5$ are described in Table \ref{A5t1}.  

Analysing the genus of the curves $\tilde{B}$ and $C$ in the different cases we deduce the following corollary:

\begin{cor}
The curves $\tilde{B}$ in $C^{(2)}$ of degree $3$ defined by the action of $G=A_4,S_4,A_5$ on a curve $D$ with arithmetic genus in the Brill-Noether range have non-positive self-intersection.
\end{cor}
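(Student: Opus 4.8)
The approach is to couple the invariants of $\tilde B$ and $\tilde D$ furnished by Theorem~\ref{summarytr} with adjunction on $C\times C$, and then to read off the conclusion from the classification summarised in the tables. Set $\tilde D=\pi_C^*(\tilde B)$. Since $\pi_C$ has degree $2$ one has $\tilde D^2=2\tilde B^2$, and since $\tilde B\cdot C_P=3$ the curve $\tilde D$ satisfies $\tilde D\cdot F_i=3$ for the classes $F_1,F_2$ of the two projections $C\times C\to C$; as $K_{C\times C}\equiv(2g(C)-2)(F_1+F_2)$, adjunction gives $\tilde B^2=p_a(\tilde D)-6g(C)+5$. By Theorem~\ref{summarytr}, $p_a(\tilde D)=g(D)+\nu(i\alpha^2i\alpha)+\nu((i\alpha)^2)$, while Riemann--Hurwitz for $D\to C=D/\langle\alpha\rangle$ (ramified exactly over the $\nu(\alpha)$ points fixed by $\alpha$, with index $3$) gives $g(D)=3g(C)-2+\nu(\alpha)$. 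Hence
\[
\tilde B^2=3-3g(C)+\nu(\alpha)+\nu(i\alpha^2i\alpha)+\nu((i\alpha)^2).
\]
In the same spirit, $p_a(\tilde B)=g(B)+\tfrac12\nu(i\alpha^2i\alpha)+\tfrac12\bigl(\nu((i\alpha)^2)-\nu(i\alpha)\bigr)$ together with Riemann--Hurwitz for $D\to B=D/\langle i\rangle$ exhibits $p_a(\tilde B)$ as an explicit affine function of $g(C)$ and of $\nu(\alpha),\nu(i),\nu(i\alpha),\nu((i\alpha)^2),\nu(i\alpha^2i\alpha)$.

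Next I would impose the Brill--Noether condition $g(C)<p_a(\tilde B)<2g(C)-1$. Two observations make this tractable. First, an integer lying strictly between $g(C)$ and $2g(C)-1$ forces $g(C)\ge 3$, so only those values of $g(C)$ are relevant. Second, in the displayed formula $\tilde B^2$ has leading term $-3g(C)$ while $p_a(\tilde B)$ has leading term $\tfrac32 g(C)$, so $p_a(\tilde B)<2g(C)-1$ is a genuine constraint bounding $g(C)$ from below once the branch data of the action are fixed. Clearing denominators turns the upper Brill--Noether inequality into a linear lower bound for $g(C)$ in terms of the $\nu$'s, and the desired conclusion $\tilde B^2\le 0$ is precisely $3g(C)\ge 3+\nu(\alpha)+\nu(i\alpha^2i\alpha)+\nu((i\alpha)^2)$; so it suffices to verify that the first inequality entails the second for every admissible configuration.

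These configurations form a finite list: for $G\in\{A_4,S_4,A_5\}$ they are governed by the conjugacy classes of $i,\alpha,i\alpha,(i\alpha)^2,i\alpha^2i\alpha$ in $G$ and by the Riemann--Hurwitz relations linking their fixed-point numbers, and they are exactly those recorded in Tables~\ref{A4t3} and \ref{A4t10} for $A_4$, \ref{S4t1} and \ref{S4t2} for $S_4$, and \ref{A5t1} for $A_5$. In each row $g(C)$, $p_a(\tilde B)$ and $\tilde B^2$ appear as affine functions of the remaining free discrete parameters (the genus of $D/G$ and the multiplicities with which the various branch orders occur); substituting into the expression for $\tilde B^2$ the lower bound on $g(C)$ forced by $p_a(\tilde B)<2g(C)-1$ yields $\tilde B^2\le 0$ in every case, which is the assertion of the corollary.

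The delicate point is this last verification for the ``boundary'' families, where $p_a(\tilde B)$ sits close to $\tfrac32 g(C)$ and $\tilde B^2$ could a priori be a small positive integer. These cannot be excluded from the inequality $p_a(\tilde B)<2g(C)-1$ alone: one must also use that the node counts of $\tilde B$ and $\tilde D$ in Theorem~\ref{summarytr} are non-negative integers (which constrains the parities of the relevant $\nu$'s), together with the exact relations among $\nu(\alpha),\nu(i),\nu(i\alpha),\nu((i\alpha)^2),\nu(i\alpha^2i\alpha)$ imposed by $\langle i,\alpha\rangle\cong G$ --- that is, the arithmetic that the tables encode --- in order to push $g(C)$ past the threshold $1+\tfrac13\bigl(\nu(\alpha)+\nu(i\alpha^2i\alpha)+\nu((i\alpha)^2)\bigr)$. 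With that in hand the corollary follows by inspection of the tables.
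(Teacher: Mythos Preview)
Your formulae for $\tilde B^2$ and $p_a(\tilde B)$ are correct and agree with those the paper derives (see Remark~\ref{selfinttr}, Corollary~\ref{genBtr} and the equations (\ref{3a})--(\ref{3f})). The overall strategy---reduce to the tables---is also the paper's. Where your write-up goes wrong is in the description of what the tables contain and how the final check is performed.

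The tables do \emph{not} record ``all admissible configurations'': there are infinitely many actions of $G$ on curves $D$ with $g(C)\ge 2$. Section~\ref{deg3higher} first imposes $\tilde B^2>0$ (together with $g(C)\ge 2$), shows this forces $\gamma=g(D/G)=0$ and bounds the branch data, and \emph{then} enumerates the finitely many surviving cases. Each row of the tables therefore carries fixed numerical values of $g=g(C)$, $p_a(\tilde B)$ and $\tilde B^2$, not ``affine functions of remaining free discrete parameters''. Consequently your proposed step---``substitute into $\tilde B^2$ the lower bound on $g(C)$ forced by $p_a(\tilde B)<2g(C)-1$''---has no content on these rows, because $g(C)$ is already a single number there.

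The actual argument is the straightforward contrapositive: run through every row of Tables~\ref{A4t3}, \ref{A4t10}, \ref{S4t1}, \ref{S4t2}, \ref{A5t1} and observe that in each one $p_a(\tilde B)\ge 2g(C)$ (indeed $p_a(\tilde B)=2g(C)+1$ throughout the $A_4$ tables, and strictly larger for $S_4$ and $A_5$). Thus no curve with $\tilde B^2>0$ satisfies $p_a(\tilde B)<2g(C)-1$, and the corollary follows. There is no ``delicate boundary'' case to worry about; the inequality $p_a(\tilde B)\ge 2g(C)-1$ holds with room to spare in every row. Your preparatory computations are fine but unnecessary for this step---the paper simply reads the conclusion off the tables.
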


The outline of the paper is as follows. In Section \ref{Backgroups} we recall some basic facts about group actions on curves that we will need during the main part of the paper. In Section \ref{triang} we study the singularities of curves of degree three defined by the action of a spherical triangular group. In Section \ref{deg3higher} we describe all such curves with positive self-intersection.

\noindent \textbf{Acknowledgments.} The most sincere gratitude to Miguel Angel Barja and Joan Carles Naranjo for the multiple discussions and the amount of time devoted to the development of this article. And finally to the Universitat de Barcelona for the research grant and their hospitality afterwards.

\textbf{Notation:} We work over the complex numbers. By curve we mean a complex projective reduced algebraic curve. Let $C$ be a smooth curve of genus $g\geq 2$, we put $C^{(2)}$ for its $2$nd symmetric product. We denote by $\pi_C:C \times C \rightarrow C^{(2)}$ the natural map, and $C_P \subset C^{(2)}$ a coordinate curve with base point $P\in C$. We denote by $p_a(C) = h^1(C,\mathcal{O}_C)$ the arithmetic genus and when $C$ is smooth by $g(C)= h^0(C, \omega_{C})$ the geometric genus (or topological genus). We will call node an ordinary singularity of order two.

For $\alpha \in \mathrm{Aut}(C)$, we denote by $\nu(\alpha)$ the number of points fixed by $\alpha$. We put $\Gamma_{\alpha}$ for the curve in $C\times C$ given by the graph of $\alpha$, that is, $\Gamma_{\alpha}=\{ (x, \alpha(x)),\ x\in C\}$.

\section{Background on group actions}\label{Backgroups}

We recall here some basic facts about group actions on curves. 

Let $C$ be a curve and let $G\subset \mathrm{Aut}(C)$ be a finite subgroup. For $P\in C$, set $G_P=\{g\in G\ |\ g(P)=P\}$ the \textbf{stabilizer} of $P$.

\begin{prop}\label{cyc}(\cite[III.7.7]{Far})
Assume $g(C)\geq 2$. Then $G_P$ is a cyclic subgroup of $\mathrm{Aut}(C)$.
\end{prop}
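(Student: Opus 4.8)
The plan is to exhibit an injective homomorphism from $G_P$ into the multiplicative group $\mathbb{C}^{*}$: since $G$ is finite, the image will be a finite subgroup of $\mathbb{C}^{*}$, hence cyclic (the group of $n$-th roots of unity for some $n$), and injectivity will then force $G_P$ itself to be cyclic.

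First I would fix a local holomorphic coordinate $z$ centred at $P$ and, for each $h\in G_P$, write $h^{*}z = \lambda\, z + O(z^{2})$ near $P$, where $\lambda=\lambda(h)$ is the derivative of $h$ at $P$ in the coordinate $z$. The chain rule, together with $h(P)=P$ for all $h\in G_P$, shows that $\chi\colon G_P\to\mathbb{C}^{*}$, $h\mapsto\lambda(h)$, is a group homomorphism (it is the action of $h$ on the tangent line $T_PC$, so it is independent of the choice of $z$). Because $G$ is finite, every $h\in G_P$ has finite order $n=\mathrm{ord}(h)$, and one can linearize the action: setting
\[
w:=\frac{1}{n}\sum_{k=0}^{n-1}\lambda^{-k}\,(h^{k})^{*}z ,
\]
a direct computation gives $dw_P=dz_P\neq 0$, so $w$ is again a local coordinate at $P$, and $h^{*}w=\lambda\,w$. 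In particular $\lambda$ is an $n$-th root of unity.

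Next I would prove that $\chi$ is injective. If $\chi(h)=1$, then in the linearizing coordinate $w$ above we get $h^{*}w=w$, so $h$ coincides with the identity on a neighbourhood of $P$; since $C$ is an irreducible curve and $h$ is holomorphic, the identity theorem forces $h=\mathrm{id}_C$. Hence $\ker\chi$ is trivial and $G_P\cong\chi(G_P)\subset\mathbb{C}^{*}$ is cyclic, as claimed.

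The hypothesis $g(C)\ge 2$ is used only to guarantee, via Hurwitz's theorem, that $\mathrm{Aut}(C)$ — and therefore $G_P$ — is finite, so that each $h$ has finite order and the averaging construction of $w$ makes sense. The main technical point to check carefully is this linearization step: verifying that $w$ is a genuine local coordinate (that is, $dw_P\neq 0$) and that it indeed diagonalizes the action of $h$; everything else is formal.
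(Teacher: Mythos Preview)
Your argument is correct and is precisely the classical proof: linearize the action of each element of $G_P$ on the tangent line $T_PC$ to obtain an injective character $G_P\hookrightarrow\mathbb{C}^{*}$, and conclude cyclicity from the structure of finite subgroups of $\mathbb{C}^{*}$. The paper does not actually prove this statement---it simply cites \cite[III.7.7]{Far}, where essentially the same tangent-line/linearization argument is given---so there is nothing further to compare; one small remark is that since $G$ is already assumed finite in the paper's setup, the finiteness of $G_P$ (and hence the averaging step) does not in fact require the hypothesis $g(C)\ge 2$ or Hurwitz's theorem.
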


In particular, if $\alpha,\beta\in Aut(C)$ are not powers of a common $\gamma\in Aut(C)$, they have no common fixed point.

Given $\alpha\in Aut(C)$, its graph $\Gamma_{\alpha}$ lies in $C\times C$ and is isomorphic to $C$. With a local computation one can see that

\begin{prop}\label{diagtrans}
The diagonal in $C\times C$ cuts the graph of an automorphism transversally.
\end{prop}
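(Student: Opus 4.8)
The plan is to follow the suggestion in the statement and reduce everything to a local computation at the (finitely many) intersection points. First I would determine $\Delta\cap\Gamma_\alpha$: a point $(x,y)$ lies on both curves exactly when $y=x$ and $y=\alpha(x)$, i.e. when $x=\alpha(x)$, so the intersection consists of the points $(P,P)$ with $P$ a fixed point of $\alpha$. One assumes here $\alpha\neq\mathrm{id}$, since otherwise $\Gamma_\alpha=\Delta$; as $g\geq 2$, $\mathrm{Aut}(C)$ is finite, so a nontrivial $\alpha$ has only finitely many fixed points and $\Delta\cap\Gamma_\alpha$ is a finite set. Transversality at such a point $(P,P)$ then means that the tangent lines $T_{(P,P)}\Delta$ and $T_{(P,P)}\Gamma_\alpha$ are distinct inside $T_{(P,P)}(C\times C)=T_PC\oplus T_PC$.

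Next I would write down these two lines using the obvious parametrizations. Since $\Delta$ is the image of $x\mapsto(x,x)$ and $\Gamma_\alpha$ the image of $x\mapsto(x,\alpha(x))$, differentiating at $P$ gives $T_{(P,P)}\Delta=\{(v,v):v\in T_PC\}$ and $T_{(P,P)}\Gamma_\alpha=\{(v,d\alpha_P(v)):v\in T_PC\}$, where $d\alpha_P\colon T_PC\to T_{\alpha(P)}C=T_PC$ is an automorphism of the one-dimensional space $T_PC$, hence multiplication by a scalar $\lambda\in\mathbb{C}^{\ast}$. These two lines coincide precisely when $\lambda=1$, so the whole proposition reduces to checking that $d\alpha_P\neq\mathrm{id}_{T_PC}$ at every fixed point $P$, i.e. $\lambda\neq 1$.

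For this last step I would pick a holomorphic local coordinate $z$ centred at $P$ and expand $\alpha(z)=\lambda z+a_2z^2+a_3z^3+\cdots$. Suppose $\lambda=1$. If $\alpha$ were not the identity near $P$, let $k\geq 2$ be minimal with $a_k\neq 0$; a short induction yields $\alpha^{m}(z)=z+m\,a_kz^{k}+O(z^{k+1})$ for all $m\geq 1$, and taking $m=n$ the (finite) order of $\alpha$ gives $z=\alpha^{n}(z)=z+n\,a_kz^{k}+\cdots$, forcing $n\,a_k=0$, a contradiction. Hence $\alpha$ would be the identity on a neighbourhood of $P$, and then $\alpha=\mathrm{id}$ by analytic continuation, against our assumption. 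Therefore $\lambda\neq 1$, the tangent lines are distinct at every intersection point, and $\Delta$ meets $\Gamma_\alpha$ transversally.

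The only real content is this last step, ruling out $d\alpha_P=\mathrm{id}$, and it genuinely uses $g\geq 2$ (through finiteness of $\mathrm{Aut}(C)$, equivalently finite order of $\alpha$): on $\mathbb{P}^1$ the analogous statement fails, since a parabolic Möbius transformation has a fixed point at which its graph is tangent to the diagonal.
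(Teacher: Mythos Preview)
Your argument is correct and is precisely the ``local computation'' the paper alludes to (the paper gives no further details beyond that phrase). The only substantive step is the linearization lemma $d\alpha_P\neq\mathrm{id}$ at a fixed point of a finite-order automorphism, which you prove cleanly via the power-series expansion; your remark that finite order (hence $g\geq 2$, or at least $\alpha$ of finite order) is genuinely needed, with the parabolic M\"obius counterexample, is a nice addition.
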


\begin{cor}(\cite{MS3})\label{grtrans}
Let $\alpha$ and $\beta$ be two automorphisms of a curve $C$. If $\alpha^{-1}\beta\neq 1$, then the graphs of $\alpha$ and $\beta$ in $C \times C$ intersect transversally and moreover, $\Gamma_{\alpha}\cdot \Gamma_{\beta}$ equals the number of points fixed by the automorphism $\alpha^{-1}\beta$, that is, $\nu(\alpha^{-1}\beta)$.
\end{cor}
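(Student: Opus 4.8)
The plan is to reduce the general case to the special situation treated in Proposition~\ref{diagtrans}, where one of the two curves is the diagonal. Consider the automorphism $\Phi$ of $C\times C$ defined by $\Phi(x,y)=(x,\alpha^{-1}(y))$. It sends $\Gamma_{\alpha}=\{(x,\alpha(x))\}$ to the diagonal $\Delta=\{(x,x)\}$ and $\Gamma_{\beta}=\{(x,\beta(x))\}$ to $\Gamma_{\alpha^{-1}\beta}=\{(x,(\alpha^{-1}\beta)(x))\}$. Since $\Phi$ is an isomorphism of surfaces it preserves intersection numbers and local transversality, so it suffices to prove the statement for the pair $\Delta$, $\Gamma_{\gamma}$ with $\gamma:=\alpha^{-1}\beta$, an automorphism satisfying $\gamma\neq 1$ by hypothesis.

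Since $C$ is irreducible and $\gamma\neq 1$, the curves $\Delta$ and $\Gamma_{\gamma}$ are distinct irreducible curves in $C\times C$, hence meet in finitely many points and $\Delta\cdot\Gamma_{\gamma}$ is the sum of the local intersection multiplicities. The set-theoretic intersection $\Delta\cap\Gamma_{\gamma}$ consists of the points $(x,x)$ with $x=\gamma(x)$, i.e.\ of the points $(P,P)$ with $P$ a fixed point of $\gamma$; there are exactly $\nu(\gamma)$ of them. By Proposition~\ref{diagtrans} the diagonal meets the graph $\Gamma_{\gamma}$ transversally, so each of these $\nu(\gamma)$ points contributes exactly $1$ to the intersection number. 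Therefore $\Delta\cdot\Gamma_{\gamma}=\nu(\gamma)=\nu(\alpha^{-1}\beta)$, and transporting back by $\Phi^{-1}$ gives that $\Gamma_{\alpha}$ and $\Gamma_{\beta}$ meet transversally with $\Gamma_{\alpha}\cdot\Gamma_{\beta}=\nu(\alpha^{-1}\beta)$.

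The only delicate point is the role of the hypothesis $\alpha^{-1}\beta\neq 1$: without it $\Gamma_{\gamma}$ coincides with $\Delta$ and the intersection product cannot be computed pointwise, which is precisely the degenerate case that is excluded. Apart from that, the argument is purely formal once Proposition~\ref{diagtrans} is available, the remaining work being the routine verification that $\Phi$ is an automorphism of $C\times C$ and the identification of $\Delta\cap\Gamma_{\gamma}$ with the fixed locus of $\gamma$.
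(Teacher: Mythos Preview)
Your argument is correct. The reduction via the automorphism $\Phi(x,y)=(x,\alpha^{-1}(y))$ of $C\times C$ is the natural way to deduce the corollary from Proposition~\ref{diagtrans}, and all the steps you list (that $\Phi$ carries $\Gamma_{\alpha}$ to $\Delta$ and $\Gamma_{\beta}$ to $\Gamma_{\alpha^{-1}\beta}$, that it preserves transversality and intersection numbers, and that $\Delta\cap\Gamma_{\gamma}$ is in bijection with the fixed points of $\gamma$) are valid.

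Note, however, that the paper does not actually supply a proof of this statement: it is quoted from \cite{MS3} without argument. So there is no in-paper proof to compare against. Your proof is exactly the expected one-line deduction from Proposition~\ref{diagtrans}, and it is what one would anticipate the cited reference contains as well.
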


\begin{lemma}(\cite{MS3})\label{orbit}
Let $G$ be a finite group of order $n$ acting on a curve $C$. Given a point $P\in C$, let $\alpha$ be a generator of $G_P$. Then we have that
\[
n=|G_P|\cdot |\{\textrm{conjugates of }G_P\}|\cdot |\{\textrm{points fixed by }\alpha\textrm{ in } \mathcal{O}_G(P)\}|.
\]
\end{lemma}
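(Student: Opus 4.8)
The plan is to deduce the identity from the orbit--stabilizer theorem together with a count of how the point-stabilizers are distributed among the points of the orbit $\mathcal{O}_G(P)$. Recall first that for $g\in G$ one has $G_{gP}=gG_Pg^{-1}$, so every point of $\mathcal{O}_G(P)$ has stabilizer in the conjugacy class of $G_P$; since $g(C)\geq 2$, Proposition \ref{cyc} ensures these stabilizers are cyclic, and in particular $G_P=\langle\alpha\rangle$, which is what lets us speak of the generator $\alpha$. The orbit--stabilizer theorem gives $n=|G_P|\cdot|\mathcal{O}_G(P)|$, so it will be enough to show
\[
|\mathcal{O}_G(P)|=|\{\textrm{conjugates of }G_P\}|\cdot|\{Q\in\mathcal{O}_G(P)\ :\ \alpha(Q)=Q\}|.
\]

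The key step is the equivalence: for $Q\in\mathcal{O}_G(P)$, $\alpha(Q)=Q$ if and only if $G_Q=G_P$. Indeed, $\alpha(Q)=Q$ means $\alpha\in G_Q$; writing $Q=gP$ we have $|G_Q|=|gG_Pg^{-1}|=|G_P|=|\langle\alpha\rangle|$, so the containment $\langle\alpha\rangle\subseteq G_Q$ between subgroups of equal order is an equality, forcing $G_Q=\langle\alpha\rangle=G_P$. The converse is immediate, as $\alpha\in G_P$. Thus the set of orbit points fixed by $\alpha$ is precisely $\{Q\in\mathcal{O}_G(P):G_Q=G_P\}$.

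Finally I would exploit the $G$-action on $\mathcal{O}_G(P)$ to see that each conjugate of $G_P$ occurs as a stabilizer for the same number of orbit points. For $h\in G$, translation $Q\mapsto hQ$ is a bijection of $\mathcal{O}_G(P)$ with $G_{hQ}=hG_Qh^{-1}$, hence it maps $\{Q:G_Q=G_P\}$ bijectively onto $\{Q:G_Q=hG_Ph^{-1}\}$; since every conjugate of $G_P$ arises as some $hG_Ph^{-1}$ (namely as $G_{hP}$), and the sets $\{Q\in\mathcal{O}_G(P):G_Q=H\}$, with $H$ ranging over these conjugates, partition $\mathcal{O}_G(P)$, we obtain
\[
|\mathcal{O}_G(P)|=|\{\textrm{conjugates of }G_P\}|\cdot|\{Q\in\mathcal{O}_G(P):G_Q=G_P\}|.
\]
Combining this with $n=|G_P|\cdot|\mathcal{O}_G(P)|$ and the equivalence of the previous paragraph yields the stated formula.

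I do not anticipate a genuine obstacle; the only point requiring attention is the equivalence ``$\alpha(Q)=Q\iff G_Q=G_P$'', where one must use the cyclicity of $G_P$ (that $\alpha$ generates it) to promote the inclusion $\langle\alpha\rangle\subseteq G_Q$ to an equality of groups. The rest is the orbit--stabilizer theorem and the fact that $G$ permutes its orbit while conjugating stabilizers accordingly.
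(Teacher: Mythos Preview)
The paper does not prove this lemma; it is quoted from \cite{MS3} without argument, so there is no in-paper proof to compare against. Your argument is correct: orbit--stabilizer gives $n=|G_P|\cdot|\mathcal{O}_G(P)|$, and your partition of $\mathcal{O}_G(P)$ by stabilizer, together with the transitivity of $G$ on the orbit (which forces all fibres $\{Q:G_Q=H\}$ to have the same size), yields the remaining factorization. The equivalence $\alpha(Q)=Q\iff G_Q=G_P$ is handled cleanly, using that all stabilizers in the orbit have the same order. One could phrase the same count via the normalizer, noting that $\{Q\in\mathcal{O}_G(P):G_Q=G_P\}$ has cardinality $[N_G(G_P):G_P]$ and the number of conjugates of $G_P$ is $[G:N_G(G_P)]$, but this is just a repackaging of your partition argument.
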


\begin{thm}[Riemann's Existence Theorem]\label{rexthm}
The group $G$ acts on a curve of genus $g$, with branching type $(g'; m_1, \dots, m_r)$ if and only if the Riemann-Hurwitz formula is satisfied and $G$ has a $(g'; m_1, \dots, m_r)$ generating vector.
\end{thm}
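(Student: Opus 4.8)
The plan is to prove both implications by translating the algebraic data of a $G$-action into the topology of the quotient and then back again, the bridge being the correspondence between finite regular coverings of a punctured surface and finite-index normal subgroups of its fundamental group. Throughout, write $n=|G|$ and recall that for a genus $g'$ surface with $r$ marked points $Q_1,\dots,Q_r$ removed, the fundamental group $\pi_1$ of the resulting punctured surface admits the standard presentation with generators $a_1,b_1,\dots,a_{g'},b_{g'},\gamma_1,\dots,\gamma_r$ and the single relation
\[
\prod_{i=1}^{g'}[a_i,b_i]\cdot\prod_{j=1}^{r}\gamma_j=1,
\]
where $\gamma_j$ is a small loop around $Q_j$. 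A $(g';m_1,\dots,m_r)$ generating vector is then exactly a tuple $(A_i,B_i,C_j)$ in $G$ that generates $G$, satisfies the image of this relation, and has $\mathrm{ord}(C_j)=m_j$.

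For the forward direction, I would start from a $G$-action on a genus $g$ curve $C$ and let $C'=C/G$, of genus $g'$, with the quotient map $\varphi\colon C\to C'$ branched over the images $Q_1,\dots,Q_r$ of the points with non-trivial stabilizer. By Proposition \ref{cyc} each stabilizer is cyclic, and the common order $m_j$ of the stabilizers over $Q_j$ is well defined because they are conjugate; the Riemann--Hurwitz formula for $\varphi$ then reads
\[
2g-2=n\Bigl(2g'-2+\sum_{j=1}^{r}\bigl(1-\tfrac{1}{m_j}\bigr)\Bigr),
\]
which is the asserted relation. Restricting $\varphi$ over $C'\setminus\{Q_1,\dots,Q_r\}$ yields an unramified regular $G$-cover, hence a surjection $\theta\colon\pi_1\to G$ whose values $A_i,B_i,C_j$ on the standard generators automatically generate $G$ and satisfy the long relation; that $\theta(\gamma_j)$ has order exactly $m_j$ is the local statement that the ramification index over $Q_j$ equals $m_j$. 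This produces the required generating vector.

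For the converse I would run the construction in reverse. Given the generating vector, assign $a_i\mapsto A_i$, $b_i\mapsto B_i$, $\gamma_j\mapsto C_j$; the long relation guarantees that this extends to a homomorphism $\theta\colon\pi_1\to G$, and surjectivity (the generation hypothesis) guarantees that the associated covering space $X_0\to C'\setminus\{Q_1,\dots,Q_r\}$ is connected with deck group $G$. This $X_0$ inherits a canonical complex structure pulled back from the base. The crucial step is to fill in the punctures: because the local monodromy $\gamma_j$ maps to $C_j$ of order $m_j$, near each preimage the cover is analytically equivalent to the model $z\mapsto z^{m_j}$, so one can adjoin finitely many points to obtain a compact Riemann surface $C$ carrying an extended $G$-action with $C/G\cong C'$ and branching type $(g';m_1,\dots,m_r)$. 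The genus of $C$ is then forced by Riemann--Hurwitz to be the prescribed $g$, completing the equivalence.

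The step I expect to be the main obstacle is precisely the analytic completion in the converse: producing a genuine algebraic (equivalently, compact Riemann surface) structure from the purely topological covering and showing that the punctures can be filled consistently with the local $z\mapsto z^{m_j}$ model. This rests on the removable-singularity and extension theorems for holomorphic maps of Riemann surfaces together with the finiteness of the local monodromy, and it is here that the exact order condition $\mathrm{ord}(C_j)=m_j$ --- rather than merely $m_j\mid\mathrm{ord}(C_j)$ --- is indispensable, since it pins down the correct ramification index and hence, via Riemann--Hurwitz, the genus $g$.
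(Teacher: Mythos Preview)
The paper does not prove this statement: Theorem~\ref{rexthm} is quoted in Section~\ref{Backgroups} as classical background (Riemann's Existence Theorem) and no proof is given, only the remark that the result is non-constructive. There is therefore nothing in the paper to compare your argument against.

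That said, your sketch is the standard proof and is correct in outline: the bijection between connected $G$-Galois covers of a punctured genus-$g'$ surface and surjections from its fundamental group to $G$, followed by analytic completion over the punctures using the finite cyclic local monodromy, is exactly how the theorem is established (see, e.g., V\"olklein, \emph{Groups as Galois Groups}, or Miranda, \emph{Algebraic Curves and Riemann Surfaces}). Two small remarks. First, in the forward direction you should note that the local loop $\gamma_j$ maps to a \emph{generator} of the cyclic stabilizer above $Q_j$, which is why $\mathrm{ord}(\theta(\gamma_j))=m_j$ exactly and not a proper divisor. Second, your closing parenthetical ``rather than merely $m_j\mid\mathrm{ord}(C_j)$'' has the divisibility the wrong way round: the danger to exclude is $\mathrm{ord}(C_j)\mid m_j$ with $\mathrm{ord}(C_j)<m_j$, which would produce smaller ramification than prescribed. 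Neither point affects the validity of your plan.
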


Where a $(g'; m_1, \dots, m_r)$ generating vector (or $G$-Hurwitz vector) is a $2g'+r$-tuple 
\[
(a_1, b_1, \dots, a_{g'},b_{g'}; c_1, \dots, c_r)
\] 
of elements of $G$ generating the group and such that $o(c_i)=m_i$ and 
\[
\prod\limits_{j=1}^{g'} [a_i,b_i] \prod\limits_{i=1}^{r}c_i=1.
\]
We call this last condition the \textbf{product one condition}. 

We remark that Riemann's Existence theorem is not a constructive result. It states the existence of such a curve, nevertheless it gives no further information about it.

\section{On degree three curves}\label{triang}

Next, we study some general properties of degree three curves in $C^{(2)}$ defined by a spherical triangular group. First of all, we prove a general property of all degree three curves, that is, we see that $\pi^{*}_C(\tilde{B})$ is always irreducible.

\begin{proof}[Proof of Proposition \ref{trirred}.] Let $B$ be the normalization of $\tilde{B}$. If $\pi_C^*(\tilde{B})$ were reducible, then $\pi_B^*(\tilde{B})=B_1+B_2$ with $B_1$ and $B_2$ two divisors with normalization $B$. Since we have a morphism from $\pi_C^*(\tilde{B})$ to $C$ of degree $3$, one, let us say $B_1$, would have a degree one morphism to $C$ and $B_2$ would have a degree two morphism to $C$. But then, on the one hand, $B$ and $C$ are isomorphic and on the other hand there is a degree two morphism from $B$ to $C$, a contradiction since we are assuming that $g(C)\geq 2$. 
\end{proof}

Hence, a degree three curve $\tilde{B}\subset C^{(2)}$, with normalization $B$, has preimage by $\pi_C$ an irreducible curve $\tilde{D}:=\pi_{C}^*(\tilde{B})$, which has normalization $D$. Regarding Theorem \ref{charcurv}, there exists a diagram of curves which does not complete defined by $\tilde{B}\subset C^{(2)}$.

As mentioned in the Introduction, we are going to study some special cases in the Galois situation, specifically those where $D$ is a curve with two automorphisms: $i$ of order $2$ and $\alpha$ of order $3$ such that they do not commute, giving a diagram \vspace{-10pt}
\begin{align}\label{diag}
\xymatrix{
D \ar[r]^{f} \ar[d]^{g}& B=D/\langle i\rangle  \\
C=D/\langle \alpha\rangle & 
} &
\begin{array}{l}
\\ \\
\text{ with } 
\langle i,\, \alpha\rangle =A_4,S_4,A_5.
\end{array}
\end{align}
We consider the curve $D$ embedded in $D\times D$ as the set of points $\{(x,i(x))\}$. We are going to study the singularities of $\tilde{D}\subset C\times C$ and $\tilde{B}\subset C^{(2)}$ to prove Theorem \ref{summarytr}.

Let $R\subset D^{(2)}$ be the divisor defined as 
\begin{equation}\label{defR}
R=\{x+y\ |\ g(x)=g(y)\}.
\end{equation} 
Since $g$ is the quotient by the action of $\alpha$, we obtain that 
\[
\pi_{D}^*R=\overline{\{(x,y) \ |\ g(x)=g(y),\ x\neq y\}}=
\{(x,\alpha(x))\}\, +\, \{(x,\alpha^2(x))\}=\Gamma_{\alpha}+\Gamma_{\alpha^2}.
\]

The points in $\pi_{D}^*R\cap D$ are pairs of different points in $D \subset D \times D$ with the same image in $\tilde{D}$, so their images by $g\times g$ are singularities of $\tilde{D}$. We are going to see that their images in $\tilde{B}$ are smooth points.

\begin{lemma}\label{trsmooth}
The image in $\tilde{B}$ by $\pi_C|_{\tilde{D}}$ of a point $(g\times g)(x,i(x))$ with $i\alpha(x)=x$ or $i\alpha^2(x)=x$ is a smooth point where the curve $\tilde{B}$ is tangent to the diagonal. 
\end{lemma}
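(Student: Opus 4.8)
My plan is to reduce the statement to a computation in local analytic coordinates around the point in question. First I would locate that point. When $i\alpha(x)=x$, applying $i$ gives $i(x)=\alpha(x)$, hence $g(i(x))=g(\alpha(x))=g(x)=:P_0$, so $(g\times g)(x,i(x))=(P_0,P_0)$ lies on the diagonal $\Delta_{C\times C}\subset C\times C$ and its image in $\tilde B$ is the point $2P_0$ of the diagonal $\Delta\subset C^{(2)}$. The case $i\alpha^2(x)=x$ follows from the case $i\alpha(x)=x$ by replacing $\alpha$ with $\alpha^2$ throughout: this changes neither the map $g$, which is the quotient by $\langle\alpha\rangle=\langle\alpha^2\rangle$, nor the group $\langle i,\alpha\rangle$, and $\mathrm{ord}(i\alpha^2)=\mathrm{ord}(\alpha i)=\mathrm{ord}(i\alpha)$; so I will only treat $i\alpha(x)=x$. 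I would also record at the outset that $x$ is not a ramification point of $g$: if $\alpha$ fixed $x$, then (using $i(x)=\alpha(x)=x$) the stabilizer $G_x$ would contain $i$ and $\alpha$, hence the non-cyclic group $\langle i,\alpha\rangle$, contradicting Proposition \ref{cyc}. Consequently $g^{-1}(P_0)=\{x,\alpha(x),\alpha^2(x)\}$ consists of three distinct points at each of which $g$ is unramified.

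Next I would compute the branch of $\tilde D$ at $(P_0,P_0)$ coming from $x$, where $\tilde D$ is normalized by $\phi\colon D\to C\times C$, $y\mapsto(g(y),g(i(y)))$. Fix a local coordinate $t$ at $P_0\in C$ and take $z:=g^{*}t$ as coordinate at $x$ and $u:=g^{*}t$ as coordinate at $\alpha(x)$; from $g\circ\alpha=g$ one gets $u\circ\alpha=z$ near $x$, so the second component of $\phi$ near $x$ is $u\circ i=z\circ(\alpha^{-1}i)=z\circ(\alpha^2 i)$. Now $\alpha^2 i$ fixes $x$ and $(\alpha^2 i)^{-1}=i\alpha$ has order $m\ge 3$: here one uses that $\langle i,\alpha\rangle\neq S_3$, since $\mathrm{ord}(i\alpha)\le 2$ together with $i^2=\alpha^3=1$ would make $\langle i,\alpha\rangle$ a quotient of $S_3$. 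A finite-order automorphism of a curve acts on the tangent line at a fixed point through a root of unity of the same order, so $z\circ(\alpha^2 i)=\lambda z+O(z^2)$ with $\lambda$ a primitive $m$-th root of unity; in particular $\lambda\neq 1$ and $\lambda\neq -1$. Thus this branch $\Gamma_1$ of $\tilde D$ is parametrized by $z\mapsto(z,\lambda z+O(z^2))$. I would then push down by $\pi_C$: near $2P_0$ the functions $\sigma:=t_1+t_2$ and $\delta:=(t_1-t_2)^2$ form a coordinate system on $C^{(2)}$ with $\Delta=\{\delta=0\}$, and on $\Gamma_1$ one finds $\sigma=(1+\lambda)z+O(z^2)$, $\delta=(1-\lambda)^2z^2+O(z^3)$. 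Since $\lambda\neq-1$, $\sigma$ is a local coordinate on $\pi_C(\Gamma_1)$; since $\lambda\neq 1$, in terms of it $\delta=c\,\sigma^2+O(\sigma^3)$ with $c\neq 0$. Hence $\pi_C(\Gamma_1)$ is smooth at $2P_0$ and meets $\Delta$ there with multiplicity exactly $2$, i.e. tangentially.

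It then remains to see that $\tilde B$ has no further branch at $2P_0$. Since $\pi_C^{-1}(2P_0)=\{(P_0,P_0)\}$, every branch of $\tilde B$ at $2P_0$ is the $\pi_C$-image of a branch of $\tilde D$ at $(P_0,P_0)$, i.e. of $\phi$ at some $y$ with $y\in g^{-1}(P_0)$ and $i(y)\in g^{-1}(P_0)$. If $\varepsilon$ denotes the involution of $C\times C$ exchanging the two factors (so $\pi_C$ is the quotient by $\varepsilon$), then $y=x$ and $y=\alpha(x)=i(x)$ give $\Gamma_1$ and $\varepsilon(\Gamma_1)$, which have the same $\pi_C$-image. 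To rule out $y=\alpha^2(x)$ I would write $i(\alpha^2(x))=\alpha^k(x)$ with $k\in\{0,1,2\}$: for $k=0,1$ the identity $i(x)=\alpha(x)$ quickly forces $\alpha(x)=x$, contrary to the first paragraph; for $k=2$ the point $\alpha^2(x)$ is fixed by the two involutions $i$ and $\alpha^2 i\alpha^2$, which must coincide because $G_{\alpha^2(x)}$ is cyclic (Proposition \ref{cyc}), and $i=\alpha^2 i\alpha^2$ rearranges to $i\alpha i=\alpha^{-1}$, making $\langle i,\alpha\rangle$ dihedral of order $6$ — impossible for $A_4,S_4,A_5$. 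Therefore $\tilde D$ has exactly the two branches $\Gamma_1$ and $\varepsilon(\Gamma_1)$ at $(P_0,P_0)$, exchanged by $\varepsilon$, so $\tilde B=\pi_C(\tilde D)$ is, near $2P_0$, the single smooth branch $\pi_C(\Gamma_1)$; combined with the previous paragraph this gives the lemma.

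I expect this last step — excluding a third branch — to be the main obstacle, since it is the only place where the particular structure of $A_4$, $S_4$ and $A_5$ (non-cyclic, and not dihedral of order $6$) is genuinely used, together with the cyclicity of point stabilizers; the intervening coordinate computations are routine.
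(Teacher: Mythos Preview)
Your argument is correct. The route, however, differs from the paper's in an instructive way.

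The paper works upstairs in $D\times D$: it views $\tilde D$ as the image under the Galois cover $g\times g$ of the graph $\Gamma_i=\{(y,i(y))\}$, so the full preimage $(g\times g)^{-1}(\tilde D)$ is the union of the nine translates $(\alpha^a\times\alpha^b)\Gamma_i$, each the graph of an automorphism of $D$. Pairwise transversality of such graphs (Corollary~\ref{grtrans}) then shows that the two branches of $\tilde D$ through $(P_0,P_0)$ --- coming from $(x,i(x))\in\Gamma_i\cap(\alpha\times\alpha^2)\Gamma_i$ and $(i(x),x)\in\Gamma_i\cap(\alpha^2\times\alpha)\Gamma_i$ --- meet transversally, i.e.\ give a node; since the factor-swap exchanges them, the paper concludes by an (unspecified) local computation that $\tilde B$ is smooth and tangent to the diagonal.

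You instead parametrize the branch directly via the normalization $\phi:D\to C\times C$, $y\mapsto(g(y),g(i(y)))$, and compute in coordinates: the derivative of $\alpha^2i$ at its fixed point $x$ is a primitive $m$-th root of unity with $m=\mathrm{ord}(i\alpha)\ge 3$, so $\lambda\neq\pm1$, and in the symmetric coordinates $(\sigma,\delta)$ on $C^{(2)}$ the branch becomes $\delta=c\,\sigma^2+\cdots$ with $c\neq 0$. This makes explicit the ``local computation'' the paper omits and pinpoints exactly where $m\ge 3$ enters (namely $\lambda\neq-1$, needed for $\sigma$ to be a local parameter on the image). Your exclusion of a third branch via cyclicity of stabilizers (Proposition~\ref{cyc}) and the non-dihedral nature of $\langle i,\alpha\rangle$ is the same group-theoretic input the paper uses, just phrased pointwise rather than through the graph picture. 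Either approach works; the paper's fits more seamlessly into its later singularity count (Proposition~\ref{singBtr} and Proposition~\ref{nodestr}), while yours is self-contained and carries out the tangency claim in full.
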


\begin{proof} First, we study the singular points in $\tilde{D}$ of the form $(g\times g)(x,i(x))$. 

Consider the morphism $g\times g: D \times D \rightarrow C\times C$. It is Galois with group $\langle 1\times \alpha, \alpha \times 1\rangle$. We consider $D=\{(x, i(x))\}$ and all its images by the elements of that group, that is, all preimages of $\tilde{D}$ by $g\times g$. 

Since $g \times g$ is Galois, each preimage curve is the graph of an automorphism in $G\subset Aut(D)$ and hence they intersect pairwise transversally (Corollary \ref{grtrans}).

We consider first singular points in $\tilde{D}$ corresponding to $(x, i(x))$ with $i\alpha(x)= x$. Each of these singular points has as preimages one point $(x, i(x))\in D\cap (\alpha \times \alpha^2)D$ and one point $(i(x), x)\in D\cap (\alpha^2\times \alpha)D$. 
Since $g\times g$ is not ramified in these points, and $D$ and its image by $\alpha^2\times \alpha$ are transversal, we deduce that $\tilde{D}$ is transversal on the image, and therefore, the images are nodes in $\tilde{D}$.

Since the points $(x,i(x))$ and $(i(x),x)=(\alpha(x), i\alpha(x))$ have the same image by the morphism $\pi_{D}$, there is only one point for each of these singularities in $B$, the normalization of $\tilde{B}$. Then, doing a local computation we deduce that $\tilde{B}$ is smooth and tangent to the diagonal in $C{(2)}$ in each of these points. 

Finally, since given a point $x$ with $i\alpha(x)=x$ the images of $(x,i(x))$ and $(i(x), x)$ are equal, we have also proved the lemma for those $x\in D$ with $i\alpha^2(x)=x$, because in that case $i(x)$ is a point fixed by $i\alpha$. 
\end{proof}

Now, we study the other singularities of  $\tilde{D}$ and $\tilde{B}$.

\begin{prop}\label{singBtr}
\[
|\mathrm{Sing}\, \tilde{B}|= \frac{1}{2}\nu(i\alpha^2i\alpha)+\frac{1}{2}(\nu((i\alpha)^2)-\nu(i\alpha)).
\]
\end{prop}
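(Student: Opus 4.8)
The plan is to count the singularities of $\tilde B$ other than the ones already handled in Lemma \ref{trsmooth} (which become smooth points of $\tilde B$), by first counting the corresponding singularities upstairs on $\tilde D\subset C\times C$ and then seeing how $\pi_C$ identifies them in $C^{(2)}$. As in the proof of Lemma \ref{trsmooth}, I would work with the Galois cover $g\times g\colon D\times D\to C\times C$, whose group is $\langle 1\times\alpha,\ \alpha\times 1\rangle$, and look at the preimages of $\tilde D$: these are the $G$-translates of the curve $D=\{(x,i(x))\}\subset D\times D$, i.e. the graphs $\Gamma_{\beta i\gamma^{-1}}$ as $(\gamma\times\beta)$ ranges over the group, all of which intersect pairwise transversally by Corollary \ref{grtrans}. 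A singular point of $\tilde D$ downstairs comes exactly from a point of $D\times D$ lying on two distinct such translate-curves, and the number of such incidences is governed by the fixed-point counts $\nu$ of the relevant automorphisms $\beta i\gamma^{-1}(\beta' i\gamma'^{-1})^{-1}$.

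Concretely, I would first argue that the only singularities of $\tilde D$ are the images of points where $D$ meets one of its $G$-translates, and that (excluding the ones coming from $i\alpha(x)=x$ or $i\alpha^2(x)=x$, treated in Lemma \ref{trsmooth}) these are precisely the points $(g\times g)(x,i(x))$ with either $(i\alpha)^2(x)=x$ (but $i\alpha(x)\neq x$) or $i\alpha^2 i\alpha(x)=x$. This is the combinatorial heart: one has to enumerate, for the curve $D$ and its translates $(\gamma\times\beta)D$, which pairs of translates can meet over a point of $D$ itself, compute the composite automorphism whose fixed locus records that intersection, and check that in the groups $A_4,S_4,A_5$ no three translates pass through a common point (so all resulting singularities of $\tilde D$ are nodes). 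The upshot should be $|\mathrm{Sing}\,\tilde D \setminus (\text{Lemma \ref{trsmooth} locus})| = \nu(i\alpha^2 i\alpha) + (\nu((i\alpha)^2)-\nu(i\alpha))$, matching the count of nodes of $\tilde D$ minus the tangency points; a short Riemann–Hurwitz / Lemma \ref{orbit} bookkeeping in $G$ may be needed to rule out coincidences between the two families.

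Then I would push down from $\tilde D\subset C\times C$ to $\tilde B\subset C^{(2)}$ via $\pi_C$, which is the quotient by the involution $(u,v)\mapsto(v,u)$. A node of $\tilde D$ away from the diagonal either is identified with another node of $\tilde D$ by the swap, giving a single node of $\tilde B$, or is swap-invariant; I would check, using that the two local branches at such a node are the images of graphs $\Gamma_{\beta i\gamma^{-1}}$ which get interchanged, that the generic node is \emph{not} swap-fixed as a pair of (point, branch-pair), so the map $\mathrm{Sing}\,\tilde D\setminus(\ldots)\to\mathrm{Sing}\,\tilde B$ is exactly $2:1$. Dividing the count from the previous paragraph by $2$ gives the claimed
\[
|\mathrm{Sing}\,\tilde B| = \tfrac12\nu(i\alpha^2 i\alpha) + \tfrac12\bigl(\nu((i\alpha)^2)-\nu(i\alpha)\bigr).
\]

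The main obstacle I anticipate is the bookkeeping in the second paragraph: correctly identifying \emph{which} composite automorphisms $\beta i\gamma^{-1}(\beta' i\gamma'^{-1})^{-1}$ arise from pairs of $G$-translates meeting over $D$, organizing them into the two conjugacy-type families that produce the terms $\nu(i\alpha^2 i\alpha)$ and $\nu((i\alpha)^2)-\nu(i\alpha)$, and verifying there is no double counting and no higher-multiplicity point. The transversality (Corollary \ref{grtrans}) and the cyclic-stabilizer fact (Proposition \ref{cyc}) should make the "all nodes, no triple points" part routine once the enumeration is set up, but getting the enumeration exactly right — in particular the subtraction of $\nu(i\alpha)$ to excise the Lemma \ref{trsmooth} contribution from the $(i\alpha)^2$ family — is where the care is required.
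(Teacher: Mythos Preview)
Your plan is essentially the paper's own approach: the paper also begins by asking when two points of $D=\{(x,i(x))\}$ have the same image in $\tilde D$, which is exactly your ``$D$ meets a $G$-translate'' criterion, splits into the two families (your $(i\alpha)^2$ and $i\alpha^2 i\alpha$ cases are the paper's Cases~B and~A), rules out triple branches via Proposition~\ref{cyc} and a group-table check, and then halves under the swap to pass from $\tilde D$ to $\tilde B$. So the strategy and the key computations coincide.

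There is, however, one genuine step your outline does not cover and which the paper handles by a different device. Your criterion ``singularities of $\tilde D$ are exactly images of points where $D$ meets a $G$-translate'' is not quite right as stated: at a point $(x,i(x))$ with $\alpha(x)=x$ (a ramification point of $g\times g$), the curve $D$ \emph{does} meet its translates $(\alpha\times 1)D$ and $(\alpha^2\times 1)D$, yet the image in $\tilde D$ is smooth, because the stabilizer $\langle\alpha\times 1\rangle$ permutes these three branches and they descend to a single branch. Your enumeration (only the automorphisms $i\alpha^2 i\alpha$ and $(i\alpha)^2$) happens to exclude these points, but you have not explained why no singularity is produced there --- and without this the ``no other singularities'' claim is unjustified. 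The paper closes this gap not by the translate picture but by switching to $g^{(2)}\colon B\subset D^{(2)}\to C^{(2)}$, examining its ramification locus (the divisor $R$ and the fibres $D_x$ over $x\in\mathrm{Ram}(g)$), and for the latter computing the local intersection $C_P\cdot\tilde B$ and $C_Q\cdot\tilde B$ at the image point to force smoothness. If you stay in the $C\times C$ picture you can instead do the local quotient computation at $(x,i(x))$ with $\alpha(x)=x$ directly, but some argument of this kind must be supplied.
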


\begin{proof}
First, we want to know when two different points in $D$ have the same image in $\tilde{D}$. We remind that $D\rightarrow \tilde{D}$ is the normalization map.

Let $(x,y)$ (with $i(x)=y$) and $(z,t)$ (with $i(z)=t$) be two different points with the same image by $g\times g$, that is, such that $\alpha^k(x)=z$ and $\alpha^r(y)=t$ for certain $k,r\in\{1,2\}$. Given such two pairs, we obtain that
\[
\begin{array}{ccccl}
x=&i(y)=&i\alpha^{3-r}(t)=& i\alpha^{3-r}i(z)=&i\alpha^{3-r}i\alpha^{k}(x),
\end{array}
\]
and similarly $y=i\alpha^{3-k}i\alpha^{r}(y)$, $z=i\alpha^{r}i\alpha^{3-k}(z)$ and $t=i\alpha^{k}i\alpha^{3-r}(t)$, i.e. they are points fixed by certain automorphisms.

We have four possibilities for $k$ and $r$ that can be gathered in two cases:

\vspace{5pt}
\textbf{Case A:} if $k=r\in \{1,2\}$, then the two points in each involution pair are fixed by the same automorphism, for instance, $x$ and $y$ are fixed by $i\alpha^2 i \alpha$ and the points $z$ and $t$ are fixed by $i\alpha i \alpha^2$.

Assume that it is the case, that is, $\boldsymbol{k=r}$. 

Let $x\in D$ be such that $i\alpha^2 i \alpha(x)=x$, that is, $i\alpha(x)=\alpha i(x)$, and take $y:=i(x)\neq x$. We denote by
\[
z_1:=\alpha(x) \hspace{7pt} t_1:=\alpha(y) \hspace{15pt} z_2:=\alpha^2(x) \hspace{7pt} t_2:= \alpha^2(y).
\]
If we consider $z_0:=x$ and $t_0:=y$ then the pairs $(z_n, t_m)$ form a fiber of the morphism $g\times g$. We notice that $i(z_1)=t_1$, so we obtain that $(z_1,t_1)\in D \subset D \times D$. 

We claim that $i(z_2)\neq t_2$. Otherwise, $i(z_2)=t_2$, so $i\alpha^2(x)=\alpha^2i(x)$ and hence $i\alpha i \alpha^2(x)=x$. This would imply that there exists a cyclic group containing both $i\alpha^2i\alpha$ and $i\alpha i\alpha^2$ (see Proposition \ref{cyc}). With a detailed analysis of the multiplication tables for our groups $\langle i,\alpha\rangle=A_4,S_4,A_5$ we see that we have reached a contradiction.

Therefore, we have two different pairs of points on $D$ with image in $\tilde{D}$ pairwise equal, that is, two singularities with two branches. We notice that $x$ and $y$ are both fixed by $i\alpha^2i\alpha$, and hence there are $\nu(i\alpha^2i\alpha)$ singularities in $\tilde{D}$ coming from this kind of points. Notice that the image of $\{(x,y),(z_1,t_1)\}$ and that of $\{(y,x),(t_1,z_1)\}$ will be two different singularities in $\tilde{D}$ with the same image in $\tilde{B}$, so they give $\frac{1}{2}\nu(i\alpha^2i\alpha)$ singularities in $\tilde{B}$ because they are not on the branch locus of $\pi_C$.

\vspace{5pt}  
\textbf{Case B:}  if $k\neq r$, $\{k,r\}= \{1,2\}$, then one of the two points in an involution pair is fixed by $i\alpha i\alpha$ and the other by $i\alpha^2 i\alpha^2$.

Assume that it is the case, that is, $\boldsymbol{k\neq r}$. 

Let $x\in D$ be a point such that $i\alpha i \alpha(x)=x$ with $i\alpha(x)\neq x$. We notice that this is only possible when $\langle i,\alpha\rangle =S_4 $ because in the other two cases the order of $i\alpha$ is prime, and hence the points fixed by it and its square are the same. Those points with $i\alpha(x)= x$ have been already considered in Lemma \ref{trsmooth} where we have seen that their images in $\tilde{B}$ are smooth points. 

With an analysis similar to the previous one we deduce that this kind of points give $\frac{1}{2}\left(\nu((i\alpha)^2)-\nu(i\alpha)\right)$ singularities in $\tilde{B}$.

\vspace{5pt}
Next, we will see that there are no other singularities in $\tilde{B}$. For a point in $B$ outside the ramification locus of $g^{(2)}$, its image will be a singularity only if there is another point with the same image, because $g^{(2)}$ is a local homeomorphism around it. This is the case we have just studied. Then, it only remains to consider the ramification points of $g^{(2)}$. 

In $B\subset D^{(2)}$ there are two types of points where $g^{(2)}$ ramifies: those in $R$ (see (\ref{defR})) and those in $D_x$ with $x$ a ramification point of $g$. We have seen in Lemma \ref{trsmooth} that the image of a point in $B\cap R$ is always smooth, so it remains only to study those in $D_x$ for $x\in \mathrm{Ram}(g)$.    

To do this, we study the intersection of $\tilde{B}$ with a coordinate curve $C_P$, with $P\in \mathrm{Branch}(g)$. We remind that $C_P\cdot \tilde{B}=3$.

Let $P\in \mathrm{Branch}(g)$ i.e. $\exists !\ x $ such that $g(x)=P$, that is, $x$ is a point fixed by $\alpha$. Let $y:=i(x)\neq x$, then, $g^{(2)}(x+y)=P+g(y)=P+Q$. 

We know that $C_P$ intersects $\tilde{B}$ in a single point $P+Q$ with multiplicity three. We want to know how $C_Q$ intersects $\tilde{B}$ to prove that it is a smooth point. We distinguish two cases:

First, if $y\in \mathrm{Ram}(g)$, that is $\alpha(y)=y$ i.e. $i\alpha i(x)=x$, then, since we are assuming that $\alpha(x)=x$, this would imply that $\langle \alpha, i\alpha i\rangle $ is contained in a cyclic group, which is not possible. 

Second, if $y\notin \mathrm{Ram}(g)$ then there exist $t$ and $z$ such that $\alpha(y)=t$ and $\alpha^2(y)=z$. If $\exists k$ such that $\alpha^{k}i(t)=i(z)$, then there would be a point in the intersection of $C_Q$ and $\tilde{B}$ with multiplicity greater than $1$, otherwise there would be two different points in this intersection: $g^{(2)}(t+i(t))$ and $g^{(2)}(z+i(z))$. In any case, these points do not belong to $C_P$, and hence, in $P+Q$ the intersection multiplicity of $C_Q$ and $\tilde{B}$ is one. Therefore, the curve $\tilde{B}$ is smooth at $P+Q$. Hence, there are no more singular points.

\end{proof}

\begin{cor}\label{singGammatr}
\[
|\mathrm{Sing}\,\tilde{D}|=\nu(i\alpha^2i\alpha)+\nu((i\alpha)^2).
\]
\end{cor}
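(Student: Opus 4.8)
The plan is to read off the number of nodes of $\tilde D$ directly from the case analysis already carried out in the proof of Proposition \ref{singBtr}, which in fact classifies \emph{every} point of $D$ (embedded as $\{(x,i(x))\}$) whose image under $g\times g$ is a non-smooth point of $\tilde D$. Equivalently, one can do the bookkeeping through the degree two map $\pi_C|_{\tilde D}\colon\tilde D\to\tilde B$, which is \'etale away from the diagonal $\Delta\subset C^{(2)}$: over a node of $\tilde B$ lying off $\Delta$ there are two nodes of $\tilde D$, while over a point of $\tilde B$ on $\Delta$ there is a single point of $\tilde D$. The two routes give the same count; I will describe the first.

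First I would list the relevant points. By the proof of Proposition \ref{singBtr}, a point of $\tilde D$ is singular if and only if it is the $(g\times g)$-image of one of: a Case~A point, i.e.\ an $x$ with $i\alpha^2 i\alpha(x)=x$ and $i(x)\neq x$; a Case~B point, i.e.\ an $x$ with $(i\alpha)^2(x)=x$ but $i\alpha(x)\neq x$ (these occur only for $S_4$); or a point of $D\cap R$, i.e.\ an $x$ with $i\alpha(x)=x$ or $i\alpha^2(x)=x$, treated in Lemma \ref{trsmooth}. In each case the local analysis already performed (two transversal branches in Cases~A and~B; the computation of Lemma \ref{trsmooth} for $D\cap R$) shows the image is a node of $\tilde D$.

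Next I would count. Case~A contributes $\nu(i\alpha^2 i\alpha)$ nodes of $\tilde D$: as shown there, $x$ and $y=i(x)$ are both fixed by $i\alpha^2 i\alpha$, and the images of $\{(x,y),(z_1,t_1)\}$ and of $\{(y,x),(t_1,z_1)\}$ are two distinct nodes. By the same argument Case~B contributes $\nu((i\alpha)^2)-\nu(i\alpha)$ nodes of $\tilde D$. Finally the points of $D\cap R$ contribute $\nu(i\alpha)$ nodes: if $i\alpha(x)=x$ then $\alpha(x)=i(x)$, so $(x,i(x))$ and $(i(x),x)$ lie in a common fibre of $g\times g$ and their image in $\tilde D$ is the point $(g(x),g(x))$ on the diagonal; distinct $\alpha$-orbits of such $x$ give distinct nodes, while the fixed points of $i\alpha^2$ yield no new nodes since $x\mapsto i(x)$ identifies them with the fixed points of $i\alpha$ (and $\nu(i\alpha)=\nu(i\alpha^2)$, as $i\alpha^2=i(i\alpha)^{-1}i$). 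Adding up, $|\mathrm{Sing}\,\tilde D|=\nu(i\alpha^2 i\alpha)+\bigl(\nu((i\alpha)^2)-\nu(i\alpha)\bigr)+\nu(i\alpha)=\nu(i\alpha^2 i\alpha)+\nu((i\alpha)^2)$; the double-cover version reads $|\mathrm{Sing}\,\tilde D|=2\,|\mathrm{Sing}\,\tilde B|+\nu(i\alpha)$ and gives the same total.

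The step requiring the most care is the last one: confirming that the $D\cap R$ contribution is counted with the right multiplicity, i.e.\ that each tangency point of $\tilde B$ on $\Delta$ lies below exactly one node of $\tilde D$, and that the fixed-point loci of $i\alpha$, $i\alpha^2$, $i\alpha^2 i\alpha$ and $(i\alpha)^2$ do not overlap in a way that causes double counting. This is precisely where Proposition \ref{cyc} (no common fixed point for automorphisms that are not powers of a common one) and the local computation of Lemma \ref{trsmooth} are used; once these are invoked, the corollary is pure bookkeeping.
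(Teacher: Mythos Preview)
Your proposal is correct and follows essentially the same approach as the paper: the paper's proof is a one-line reference back to the case analysis in Proposition~\ref{singBtr} and Lemma~\ref{trsmooth} (``analysing the preimages of the singularities of $\tilde B$ and the possible tangencies of $\tilde D$ and the diagonal''), and you have simply spelled out that bookkeeping explicitly, arriving at the same decomposition $\nu(i\alpha^2 i\alpha)+(\nu((i\alpha)^2)-\nu(i\alpha))+\nu(i\alpha)$. Your double-cover formulation $|\mathrm{Sing}\,\tilde D|=2\,|\mathrm{Sing}\,\tilde B|+\nu(i\alpha)$ is exactly what the paper's terse sentence is pointing to.
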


\begin{proof}
Analysing the preimages of the singularities of $\tilde{B}$ and the possible tangencies of $\tilde{D}$ and the diagonal we see that the curve $\tilde{D}$ does not have more singularities than those considered during the proof of Proposition \ref{singBtr}.
\end{proof}

\vspace{5pt}
Now, we study which kind of singularities they are. 

\begin{prop}\label{nodestr}
All singularities in $\tilde{D}$ and $\tilde{B}$ are nodes.
\end{prop}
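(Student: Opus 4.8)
The plan is to work locally at each singular point and exhibit two smooth branches meeting with distinct tangent directions, which is exactly the statement that the singularity is a node. We already know from the proofs of Proposition \ref{singBtr} and Corollary \ref{singGammatr} that every singular point of $\tilde D$ (resp. $\tilde B$) arises in one of the described ways: either it is the image under $g\times g$ (resp. $g^{(2)}$) of a point of $\pi_D^*R\cap D$ with $i\alpha(x)=x$ or $i\alpha^2(x)=x$ — but those were shown in Lemma \ref{trsmooth} to be smooth on $\tilde B$, so they only contribute to $\tilde D$ — or else it comes from two genuinely distinct pairs of points in $D\times D$ lying over a common fibre of $g\times g$ (Cases A and B in the proof of Proposition \ref{singBtr}). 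In all cases the number of local branches is exactly two, so it remains only to check transversality of the two branches.

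First I would treat the branches of $\tilde D$ coming from Cases A and B. Here the key input is Corollary \ref{grtrans}: the full preimage of $\tilde D$ in $D\times D$ under the Galois morphism $g\times g$ is a union of graphs of automorphisms in $G=\langle i,\alpha\rangle$, and any two distinct such graphs $\Gamma_{\sigma}$, $\Gamma_{\tau}$ meet transversally (since $\sigma^{-1}\tau\neq 1$). Because $g\times g$ is unramified at the relevant points (the singular points of $\tilde D$ in Cases A and B lie off the branch locus of $g\times g$ — the ramification of $g\times g$ is concentrated on $R$ and on the fibres over ramification points of $g$, both already disposed of), it is a local analytic isomorphism there, hence carries the two transversal branches upstairs to two transversal branches downstairs. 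Therefore these singularities of $\tilde D$ are nodes.

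Next I would handle the branches of $\tilde D$ coming from $\pi_D^*R\cap D$, i.e. points $(g\times g)(x,i(x))$ with $i\alpha(x)=x$ or $i\alpha^2(x)=x$. This is precisely the situation analysed in the proof of Lemma \ref{trsmooth}: the two preimages are $(x,i(x))\in D\cap(\alpha\times\alpha^2)D$ and $(i(x),x)\in D\cap(\alpha^2\times\alpha)D$, again two distinct graphs of automorphisms, meeting transversally by Corollary \ref{grtrans}, and $g\times g$ is unramified at these points, so the image is a node of $\tilde D$. Finally, for $\tilde B$ one pushes the nodal singularities of $\tilde D$ from Cases A and B down by $\pi_C$: since these points lie off the branch locus of $\pi_C$ (they are not on the diagonal — the pairs $(x,i(x))$ with $i(x)=x$ are excluded in both cases), $\pi_C$ is a local isomorphism near them, and two transversal branches map to two transversal branches. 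Hence every singularity of $\tilde B$ is a node as well.

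I expect the main obstacle to be bookkeeping rather than geometry: one must be careful that at each of the enumerated singular points the two local branches really are smooth (they are, being images of smooth curves $D$, $\Gamma_\sigma$, etc., under unramified maps) and that the maps $g\times g$, $g^{(2)}$, $\pi_C$ are unramified exactly where claimed, so that "transversal upstairs" transfers to "transversal downstairs". Once the ramification loci are correctly identified — and this identification was already carried out in Proposition \ref{singBtr} — the transversality statements are immediate consequences of Corollary \ref{grtrans}, and no further local computation is needed.
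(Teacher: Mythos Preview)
Your proposal is correct and follows essentially the same route as the paper: both arguments identify the preimage of $\tilde D$ under $g\times g$ as a union of graphs of automorphisms, invoke Corollary~\ref{grtrans} for transversality, use that $g\times g$ is unramified at the relevant points to conclude the singularities of $\tilde D$ are nodes, and then push down via $\pi_C$ (unramified off the diagonal) to get nodes on $\tilde B$, with the diagonal case already handled by Lemma~\ref{trsmooth}. The only cosmetic difference is that the paper phrases the preimage curves as the $\langle 1\times\alpha,\alpha\times 1\rangle$-translates of $D=\Gamma_i$ rather than directly as graphs $\Gamma_{\alpha^b i\alpha^{-a}}$, but this is the same thing.
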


\begin{proof}
We begin studying the singularities on $\tilde{D}$ and later their image by $\pi_C$.

As we have seen in the proof of Lemma \ref{trsmooth}, the preimage of $D$ by $g\times g$ consists of the graphs of the elements in $\langle 1\times \alpha, \alpha \times 1 \rangle$. These divisors intersect transversally and $D$ is also transversal in the image of these points.

Taking the intersections of $D$ with its images by elements of the group $\langle 1\times \alpha, \alpha \times 1 \rangle$, we recover the cases in the proof of Proposition \ref{singBtr} of possible singularities, that in this language are
\[
\begin{array}{cc}
\left.{\begin{array}{ccc}
i\alpha^2i\alpha(x)=x & \Leftrightarrow i\alpha(x)=\alpha i (x)& \leftrightarrow D \cap (\alpha \times \alpha)D \\
i\alpha i\alpha^2(x)=x & \Leftrightarrow i\alpha^2(x)=\alpha^2 i (x) & \leftrightarrow D \cap (\alpha^2\times \alpha^2)D \end{array}}\right\} & {\begin{array}{c}\alpha^2 \times \alpha^2\\ \textrm{permutes them.} \end{array}}\\[15pt]
\left.{\begin{array}{ccc}
i\alpha^2i\alpha^2(x)=x & \Leftrightarrow i\alpha^2(x)=\alpha i (x) & \leftrightarrow D \cap (\alpha^2\times \alpha)D \\
i\alpha i\alpha(x)=x & \Leftrightarrow i\alpha(x)=\alpha^2 i (x) & \leftrightarrow D \cap (\alpha \times \alpha^2)D \end{array}}\right\} & {\begin{array}{c}\alpha^2 \times  \alpha\\ \textrm{permutes them.} \end{array}}
\end{array}
\]
The first two correspond to Case A and the last two correspond to Case B. 

Notice that, those singular points $(g(x_0), g(i(x_0)))$ with $i\alpha(x_0)\neq x_0$ are not on the diagonal of $C\times C$, and hence $\pi_C$ does not ramify on them. Therefore, their images on $C^{(2)}$ are also nodes. 

Those singular points $(g(x_0), g(i(x_0)))$ with $i\alpha(x_0)= x_0$ are on the diagonal of $C\times C$, and hence $\pi_C$ ramifies on them. We have already seen in Lemma \ref{trsmooth} that their images in $\tilde{B}$ are smooth points. 
\end{proof}   

\vspace{5pt}
Therefore, by Propositions \ref{singBtr} and \ref{nodestr} we obtain that

\begin{cor}\label{genBtr}
\[
p_a(\tilde{B})-g(B)=\frac{1}{2}\nu(i\alpha^2i\alpha)+\frac{1}{2}(\nu((i\alpha)^2)-\nu(i\alpha)).
\]
\end{cor}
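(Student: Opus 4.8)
The plan is to deduce the formula from the standard relation between the arithmetic genus of a reduced projective curve and the genus of its normalization. Recall that if $\nu\colon B\to\tilde{B}$ is the normalization map of a reduced projective curve, then
\[
p_a(\tilde{B})-p_a(B)=\sum_{Q\in\tilde{B}}\delta_Q,
\qquad
\delta_Q=\dim_{\mathbb{C}}\bigl(\nu_*\mathcal{O}_B/\mathcal{O}_{\tilde{B}}\bigr)_Q,
\]
where the local $\delta$-invariant $\delta_Q$ vanishes at smooth points and equals $1$ precisely at a node (an ordinary double point).

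First I would recall, from the Notation and from Theorem \ref{charcurv}, that $B$ is the normalization of $\tilde{B}$ and is smooth, so that $p_a(B)=g(B)$ is the geometric genus of $\tilde{B}$. Next, by Proposition \ref{nodestr} every singular point of $\tilde{B}$ is a node, hence contributes exactly $1$ to the sum above, giving $p_a(\tilde{B})-g(B)=|\mathrm{Sing}\,\tilde{B}|$. Finally, I would substitute the count obtained in Proposition \ref{singBtr}, namely $|\mathrm{Sing}\,\tilde{B}|=\tfrac12\nu(i\alpha^2i\alpha)+\tfrac12\bigl(\nu((i\alpha)^2)-\nu(i\alpha)\bigr)$, to arrive at the stated identity.

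Since the two ingredients needed — that the singularities of $\tilde{B}$ are nodes and their exact number — have already been established in Propositions \ref{nodestr} and \ref{singBtr}, there is essentially no obstacle here; the argument is a one-line consequence. The only point deserving a word of care is that the right-hand side is a well-defined nonnegative integer: the term $\tfrac12\bigl(\nu((i\alpha)^2)-\nu(i\alpha)\bigr)$ vanishes unless $\langle i,\alpha\rangle=S_4$ (in the other cases $i\alpha$ has prime order, so it and its square share the same fixed points), and in the $S_4$ case the fixed points counted by $\nu((i\alpha)^2)-\nu(i\alpha)$ were seen in the proof of Proposition \ref{singBtr} to occur in pairs giving $\tfrac12\bigl(\nu((i\alpha)^2)-\nu(i\alpha)\bigr)$ nodes, so the formula is consistent.
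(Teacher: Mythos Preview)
Your proposal is correct and follows essentially the same approach as the paper: the paper simply states that the corollary follows from Propositions \ref{singBtr} and \ref{nodestr}, which is exactly what you do---use that all singularities are nodes (so each contributes $\delta=1$) and then substitute the singularity count. Your additional remarks on the $\delta$-invariant and on the integrality of the right-hand side are fine elaborations but not needed for the argument.
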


With Propositions \ref{singBtr} and \ref{nodestr}, and Corollary \ref{singGammatr} we have proven Theorem \ref{summarytr}.

\begin{rmk}\label{selfinttr}
Moreover, by \cite[Lemma 2.1]{MS3}, we deduce that:
\[
\tilde{B}^2 = g(D)-1-3(2g-2)+ \nu(i\alpha^2i\alpha) + \nu((i\alpha)^2).
\]
\end{rmk}

\section{Positive self-intersection curves}\label{deg3higher}

Now, we consider curves with positive self-intersection, that is, we consider $\tilde{B}^2>0$. We will describe all generating vectors of $G$ with $G=A_4,\ S_4,\ A_5$ that give a non-completing diagram of morphisms of curves characterizing a curve $\tilde{B}\subset C^{(2)}$ with $\tilde{B}^2>0$ and $g(C)\geq 2$.

We are going to consider separately each group $G=A_4,S_4, A_5$. We begin with a numerical analysis of our hypothesis and later, for those values compatible with the hypothesis, we give (or prove that it does not exist) a generating vector defining a curve $D$ with an action of $G$ and the prescribed ramification. To simplify the notation, we describe the generating vector of $G$ giving a product one relation of elements of $G$. Each generator is written in square brackets $[\cdot]$ and the exponent of the brackets denote the number copies in the vector. We prove in the notes after the tables that the elements taken generate the whole group when it is not absolutely clear. 

We have by hypothesis that the curves $C$ and $B$ lay in a diagram as \eqref{diag}. We denote by 
\[
\begin{array}{ccccc}
b=g(B) & g=g(C) & h=g(D)&&\\[3pt]
s=\nu(\alpha) & t=\nu(i) & r=\nu(i\alpha) & r+k=\nu((i\alpha)^2) & e=\nu(i\alpha^2i\alpha).
\end{array}
\]

With this notation, the equality in Remark \ref{selfinttr} translates into 
\[
\tilde{B}^2=h-1-3(2g-2)+e+k+r.
\]

First, we are going to use our hypothesis to give some restrictions for the possible values of $b,g,h,s,t,r,k$ and $e$.
\begin{itemize}
\item  By the Riemann-Hurwitz formula for the morphism $D \rightarrow C$ we obtain
\begin{equation}\label{3a}
g=\frac{h+2-s}{3}.
\end{equation}
\item Since $\tilde{B}^2>0$, by (\ref{3a}) we obtain that
\begin{equation}\label{3b}
 h\leq 2s+e+k+r.
\end{equation}
\item By the Riemann-Hurwitz formula for the morphism $D \rightarrow B$ we obtain that $ b=\dfrac{2h+2-t}{4}$. Therefore, by Propositions \ref{singBtr} and \ref{nodestr} we deduce that
\begin{equation}\label{3c}
p_a(\tilde{B})=\frac{2h+2-t}{4}+\frac{1}{2}(e+k).
\end{equation}
\item By (\ref{3a}) and (\ref{3c}) the necessary inequality $g<p_a(\tilde{B})$ translates into
\begin{equation}\label{3d}
2+3t < 2h+4s+6e+6k.
\end{equation}
\item From (\ref{3b}) and (\ref{3d}) we deduce that
\begin{equation}\label{3f}
2+3t< 8e+8k+2r+8s.
\end{equation}
\end{itemize}
  
Next, we consider separately the different finite groups.  

\subsection{Alternate group of degree $4$}

Let $\langle i, \alpha\rangle \cong A_4$. Then, $o(i\alpha)=3$, so $\nu(i\alpha)=\nu((i\alpha)^2)$ and hence $k=0$. Since $i\alpha$ is conjugated to $\alpha$, then $r=s$ and since $i\alpha^2i\alpha$ is conjugated to $i$,  then $e=t$. Therefore, the conditions $g\geq 2$ and $\tilde{B}^2>0$ translate into
\begin{equation}\label{condA4}
h-s\geq 4\hspace{5pt}\textrm{ and }h\leq 3s+t.
\end{equation}

Now, we consider the action of $A_4$ on a curve $D$. The group $A_4$ has three non identity conjugacy classes, those of $i$, $\alpha$ and $\alpha^2$. Since $\alpha$ and $\alpha^2$ have the same fixed points, the Riemann-Hurwitz formula for $D\rightarrow D/A_4$ reads: 
\[
2h-2=24\gamma -24+8s+3t.
\]
Therefore, imposing the lefthand side of \eqref{condA4} we obtain that 
\begin{equation}\label{B0A4}
24\gamma+2s+t\leq 22 \Rightarrow\ \gamma=0
\Rightarrow\ h=4s+\frac{3}{2}t-11.
\end{equation}
By (\ref{B0A4}), the conditions in \eqref{condA4} translate into  
\begin{equation}\label{A4g}
3s+\frac{3}{2}t\geq 15 \text{ and }s+\frac{1}{2}t\leq 11.
\end{equation}

We are going to analyse all possible values of $s$ and $t$ satisfying the inequalities in (\ref{A4g}). With this conditions we observe that we can discard the following cases:
\[
(s,t)\in \{(2,0), (2,2), (2,4), (3,0), (3,2), (4,0)\}.
\]

Given a pair $(s,t)$ satisfying all the conditions, we find a curve $D$ with the action of $A_4$ with the prescribed ramification, when possible, giving the generating vector of $A_4$. If one of the conditions is not satisfied, then there is no such action. These elements determine the branching data for the covering $D\rightarrow D/A_4$ in the following way: there is one branch point for each element, and the monodromy over this branch point is determined by the conjugacy class of the element.
 
According to this, if $s=0$, then the only possible elements in the set of generators are $i$ and its conjugates, that do not generate $A_4$. 

Moreover, if $s=1$, then in any possible set of elements of $A_4$ used to describe the action, there would be one element conjugated to $\alpha$ and the rest would be conjugated to $i$. We observe that all elements conjugated to $i$ have zero or three copies of $\alpha$ on their expression, and all conjugates of $\alpha$ have one, two or four copies of $\alpha$ on their expression. Thus, we deduce that the product of all of them will have $3j\pm 1$ copies of $\alpha$ on the expression, and hence, the condition of product one is not possible to be satisfied. 

For the rest of values satisfying $s+\frac{1}{2}t\leq 11$ we can find a curve with the described action of $A_4$. We study each value of $s$ separately, giving the value of $t$ and the invariants of the curves $C$ and $\tilde{B}\subset C^{(2)}$.  

We list the possible values of $s$ and $t$ and give a table describing the different cases.

{
\setlength{\extrarowheight}{2pt}
\begin{table}[H]\centering 
\begin{tabular}{| c | c | c | c | c | c | c | c |}
\hline
$s$ & $t$ & branching data & $h$ & $g$& $b$& $p_a(\tilde{B})$ & $\tilde{B}^2$ \\
\hline
$2$ & $6$ & $[i]^2[\alpha i \alpha^2][\alpha i \alpha][\alpha]=1$ & $6$& $2$& $2$& $5$& $7$\\
\hline						
 & $8$ & $[i]^3[\alpha i \alpha^2][\alpha i \alpha][\alpha i]=1$ & $9$& $3$& $3$& $7$& $6$\\
\hline
 & $10$ & $[i]^4[\alpha i \alpha^2][\alpha i \alpha][\alpha]=1$ & $12$ &$4$ &$4$& $9$& $5$\\
\hline
 & $12$ & $[i]^5[\alpha i \alpha^2][\alpha i \alpha][\alpha i]=1$ &$15$& $5$ &$5$& $11$& $4$\\
\hline
 & $14$ & $[i]^6[\alpha i \alpha^2][\alpha i \alpha][\alpha]=1$ & $18$& $6$& $6$& $13$& $3$\\
\hline
 & $16$ & $[i]^7[\alpha i \alpha^2][\alpha i \alpha][\alpha i]=1$ &$21$& $7$& $7$ &$15$& $2$\\
\hline
 & $18$ & $[i]^8[\alpha i \alpha^2][\alpha i \alpha][\alpha]=1$ & $24$& $8$& $8$& $17$& $1$\\
\hline

$3$ & $4$ & $[\alpha]^3[i]^2=1$ & $7$& $2$& $3$& $5$& $7$\\
\hline
 & $6$ & $[i\alpha][\alpha^2 i \alpha][\alpha]^2[i]^2=1$ &	$10$& $3$& $4$& $7$& $6$\\
\hline
 & $8$& $[\alpha]^3[i]^4=1$ & $13$& $4$& $5$& $9$& $5$\\
\hline
 & $10$& $[i\alpha][\alpha^2 i \alpha][\alpha]^2[i]^4=1$ &$16$& $5$& $6$& $11$& $4$\\
\hline 
 & $12$ & $[\alpha]^3[i]^6=1$ & $19$& $6$& $7$& $13$& $3$\\
\hline
 & $14$ & $[i\alpha][\alpha^2 i \alpha][\alpha]^2[i]^6=1$ &$22$& $7$& $8$& $15$& $2$\\
\hline
 & $16$ & $[\alpha]^3[i]^8=1$ & $25$& $8$& $9$& $17$& $1$\\
\hline

$4$ & $2$ & $[\alpha i \alpha][\alpha][\alpha i \alpha][\alpha i][i]=1$ &	$8$& $2$& $4$& $5$& $7$\\
\hline
 & $4$ & $[\alpha i\alpha][\alpha][\alpha i \alpha][\alpha][i]^2=1$&	$11$& $3$& $5$& $7$& $6$\\
\hline 
 & $6$& $[\alpha i \alpha][\alpha][\alpha i \alpha][\alpha i][i]^3=1$ & $14$& $4$& $6$& $9$& $5$\\
\hline
 & $8$ & $[\alpha i\alpha][\alpha][\alpha i \alpha][\alpha][i]^4=1$ & $17$& $5$& $7$& $11$& $4$\\
\hline
 & $10$ & $[\alpha i \alpha][\alpha][\alpha i \alpha][\alpha i][i]^5=1$ &$20$& $6$& $8$& $13$& $3$\\
\hline
 & $12$& $[\alpha i\alpha][\alpha][\alpha i \alpha][\alpha][i]^6=1$&	$23$& $7$& $9$& $15$& $2$\\
\hline
 & $14$& $[\alpha i \alpha][\alpha][\alpha i \alpha][\alpha i][i]^7=1$& $26$& $8$ &$10$& $17$& $1$\\
\hline
\end{tabular}
\caption{Action of $A_4$ with $\boldsymbol{s=2,3,4}$}
\label{A4t3}
\end{table}

\begin{table}[H]\centering 
\begin{tabular}{| c |c | c | c | c | c | c | c |}
\hline
$s$ & $t$ & branching data & $h$ & $g$& $b$& $p_a(\tilde{B})$ & $\tilde{B}^2$ \\
\hline
$5$ & $0$ & $[\alpha i \alpha][\alpha][\alpha i \alpha][\alpha^2]^2=1$& $9$& $2$& $5$& $5$& $7$\\
\hline
 & $2$& $[\alpha i\alpha][\alpha][\alpha i \alpha][\alpha^2][\alpha^2 i][i]=1$& $12$& $3$& $6$& $7$& $6$\\
\hline
 & $4$& $[\alpha i \alpha][\alpha][\alpha i \alpha][\alpha^2]^2[i]^2=1$&	$15$& $4$& $7$& $9$& $5$\\
\hline
 & $6$ & $[\alpha i\alpha][\alpha][\alpha i \alpha][\alpha^2][\alpha^2 i][i]^3=1$&	$18$& $5$& $8$& $11$& $4$\\
\hline
 & $8$ & $[\alpha i \alpha][\alpha][\alpha i \alpha][\alpha^2]^2[i]^4=1$& $21$& $6$& $9$& $13$& $3$\\
\hline
 & $10$ & $[\alpha i\alpha][\alpha][\alpha i \alpha][\alpha^2][\alpha^2 i][i]^5=1$ & $24$& $7$& $10$& $15$& $2$\\
\hline
 & $12$ & $[\alpha i \alpha][\alpha][\alpha i \alpha][\alpha^2]^2[i]^6=1$ & $27$& $8$& $11$& $17$& $1$\\
\hline
					
$6$ & $0$ & $[\alpha]^3[\alpha i]^3=1$ & $13$& $3$& $7$& $7$& $6$\\
\hline
 & $2$ & $[i\alpha][\alpha^2 i \alpha][\alpha]^5=1$ & $16$ &$4$& $8$& $9$& $5$\\
\hline
 & $4$ & $[\alpha]^6[i]^2=1$ & $19$& $5$& $9$& $11$& $4$\\
\hline
 & $6$ & $[i\alpha][\alpha^2 i \alpha][\alpha]^5[i]^2=1$ &	$22$& $6$& $10$& $13$& $3$\\
\hline
 & $8$ & $[\alpha]^6[i]^4=1$ & $25$& $7$& $11$& $15$& $2$\\
\hline
 & $10$ & $[i\alpha][\alpha^2 i \alpha][\alpha]^5[i]^4=1$ & $28$& $8$& $12$& $17$& $1$\\
\hline


$7$ & $0$ & $[\alpha]^3[\alpha i \alpha][\alpha][\alpha i \alpha][\alpha]=1$ & 	$13$ &$3$& $7$& $7$& $6$\\
\hline
 & $2$ & $[\alpha]^3[\alpha i \alpha][\alpha][\alpha i \alpha][\alpha i][i]=1$ & $20$& $5$& $10$& $11$& $4$\\
\hline
 & $4$ & $[\alpha]^3[\alpha i \alpha][\alpha][\alpha i \alpha][\alpha][i]^2=1$ & $23$& $6$& $11$& $13$& $3$\\
\hline
 & $6$ & $[\alpha]^3[\alpha i \alpha][\alpha][\alpha i \alpha][\alpha i][i]^3=1$ &	$26$& $7$& $12$& $15$& $2$\\
\hline
 & $8$ & $[\alpha]^3[\alpha i \alpha][\alpha][\alpha i \alpha][\alpha][i]^4=1$ & $29$& $8$& $13$& $17$& $1$\\
\hline

$8$ & $0$ & $[\alpha]^3[\alpha i \alpha][\alpha][\alpha i \alpha][\alpha^2]^2=1$ & $21$& $5$& $11$& $11$& $4$\\
\hline
 & $2$ & $[\alpha]^3[\alpha i \alpha][\alpha][\alpha i \alpha][\alpha^2][\alpha^2 i][i]=1$ & $24$& $6$& $12$& $13$& $3$\\
\hline
 & $4$ & $[\alpha]^3[\alpha i \alpha][\alpha][\alpha i \alpha][\alpha^2]^2[i]^2=1$ & $27$& $7$& $13$& $15$& $2$\\
\hline
 & $6$ & $[\alpha]^3[\alpha i \alpha][\alpha][\alpha i \alpha][\alpha^2][\alpha^2 i][i]^3=1$ & $30$& $8$& $14$& $17$& $1$\\
\hline

$9$ & $0$ & $[\alpha]^6[\alpha i]^3$ & $25$& $6$& $13$& $13$& $3$\\
\hline
 & $2$ & $[i \alpha][\alpha^2 i \alpha][\alpha]^8=1$ & $28$& $7$& $14$& $15$& $2$\\
\hline
 & $4$ & $[\alpha]^9[i]^2$ & $31$& $8$& $15$& $17$& $1$\\
\hline

$10$ & $0$ & $[\alpha]^6[\alpha i\alpha][\alpha][\alpha i \alpha][\alpha]=1$ & $29$& $7$& $15$& $15$& $2$\\
\hline
 & $2$ & $[\alpha]^6[\alpha i\alpha][\alpha][\alpha i \alpha][\alpha i][i]=1$ & $32$& $8$& $16$& $17$& $1$\\
\hline

$11$ & $0$ & $[\alpha]^6[\alpha i\alpha][\alpha][\alpha i \alpha][\alpha^2]^2=1$ & $33$& $8$& $17$& $17$& $1$\\
\hline
\end{tabular}
\caption{Action of $A_4$ with $\boldsymbol{s=5,\dots,11}$}
\label{A4t10}
\end{table}
}

\subsection{Symmetric group of degree $4$}

Let $\langle i, \alpha\rangle \cong S_4$, we recall that $|S_4|=24$.

We take $i=(1\ 2)$, $\alpha=(1\ 4\ 3)$, $i\alpha=(1\ 4\ 3\ 2)$, $(i\alpha)^2=(1\ 3)(2\ 4)$ and $i\alpha^2i\alpha=(1\ 2\ 3)$. Then, $\alpha$ and $i\alpha^2i\alpha$ are conjugated, and we deduce that $s=e$. 

Let $\gamma=g(D/S_4)$ and consider the morphism $D\rightarrow D/S_4$. By Lemma \ref{orbit} we obtain that
the points fixed by $i$ give $\frac{t}{2}$ branch points of order $2$, hence $t$ is even. Similarly, $s$ and $r$  are even and $k$ is multiple of $4$. Therefore, the conditions $g\geq 2$ and $\tilde{B}^2>0$ translate into 
\begin{equation}\label{condS4}
h-s\geq 4\text{ and }h\leq 3s+r+k.
\end{equation}

Now, we consider the action of $S_4$ on the curve $D$. The group $S_4$ has four non identity conjugacy classes, those of $i$ (one transposition), $\alpha$ (cycles of order three), $\beta=i\alpha$ (cycles of order $4$) and $\beta^2$ (double transpositions). Since $\beta$ and $\beta^2$ have the same fixed points, the Riemann-Hurwitz formula for $D\rightarrow D/S_4$ reads:
\[
h=24\gamma -23+3t+4s+\frac{9}{2}r+\frac{3}{2}k.
\]
Therefore, by the left hand side of \eqref{condS4} we obtain that 
\begin{align}
24\gamma +3t+s+\frac{7}{2}r+\frac{1}{2}k\leq 23 \Rightarrow \gamma=0\nonumber \\
\Rightarrow h=-23+3t+4s+\frac{9}{2}r+\frac{3}{2}k.\label{B0S4}
\end{align}
By (\ref{B0S4}), the conditions \eqref{condS4} translate into 
\begin{equation}\label{S4B}
3t+s+\frac{7}{2}r+\frac{1}{2}k\leq 23 \text{ and }
3t+3s+\frac{9}{2}r+\frac{3}{2}k\geq 27.
\end{equation}

Now, we analyse all possible values for $r$, $t$, $s$ and $k$ satisfying the inequalities in (\ref{S4B}). With these conditions we can discard the following cases:
\begin{align*}
(r,t,s,k)\in \{&(0,4,0,0), (0,4,0,4), (0,4,0,8), (0,4,2,0), (0,4,2,4),\\
& (0,4,4,0),(2,2,0,0), (2,2,0,4), (2,2,2,0) \}
\end{align*}

Given $(r,t,s,k)$ satisfying all the conditions, we define a curve $D$ with the action of $S_4$, with the prescribed ramification, giving the generating vector of $S_4$.

For $r=0$ and $t=0$ in any possible set of elements of $S_4$ used to describe the action there would be only elements conjugated to $\alpha$ and $\beta^2$ that have even index, and so they could not generate the whole $S_4$, where there are also odd index elements. 

Since $i$ and $\beta$ have odd index, in order to have the product one condition we need $\frac{t}{2}+\frac{r}{2}$ to be even, or which is the same, $t+r$ to be multiple of four. With this condition we discard the cases   
\[
(r,t)\in\{(0,2), (0,6), (2,0), (2,4), (4,2), (6,0)\}
\]

Since the product of two double transpositions is again a double transposition or $1$, with a detailed study of the multiplication table of $S_4$ we deduce that we cannot have both the generation and the product one condition at the same time in the following cases.
\begin{align*}
(r,t,s,k)\in\{& (0,4,0, 12), (0,4,0,16), (0,4,0,20), (2,2,0,8), (2,2,0,12),\\
& (2,2,0,16), (2,2,0,20), (4,0,0,0), (4,0,0,4)\}
\end{align*}

Hence, there are only two possible pairs of values $(r,t)$. For the remaining values satisfying $3t+s+\frac{7}{2}r+\frac{1}{2}k\leq 23$ we can find a curve with the action of $S_4$. We consider the two pairs of values $(r,t)$ separately, and give a table with the values of $s$ and $k$, the product one relation and the invariants of the curves $C$ and $\tilde{B}\subset C^{(2)}$.  

{\setlength{\extrarowheight}{1.5pt}
\begin{table}[H]\centering 
\begin{tabular}{|c |c | c | c | c | c | c | c |}
\hline
$s$ & $k$ & branching data & $h$ & $g$& $b$& $p_a(\tilde{B})$ & $\tilde{B}^2$ \\
\hline
$2$ &  $8$ & $\begin{array}{c} [(12)][(23)][(132)][(13)(24)]^2= \\ {[t_1]}[t_4][\alpha_1^2][\beta_1^2]^2=1 \end{array}$ (\ref{n1}) & $9$ & $3$& $4$& $9$& $6$\\
\hline
 & $12$ & $\begin{array}{c} [(12)][(23)][(132)][(13)(24)] \\ {[(12)(34)][(14)(23)]=} \\ {[t_1]}[t_4][\alpha_1^2][\beta_1^2][\beta_3^2][\beta_2^2]=1\end{array}$ (\ref{n1}) & $15$ & $5$& $7$& $14$& $4$\\
\hline
& $16$ & $\begin{array}{c} [(12)][(23)][(132)][(13)(24)]^4= \\ {[t_1]}[t_4][\alpha_1^2][\beta_1^2]^4=1\end{array}$ (\ref{n1}) & $21$& $7$& $10$& $19$& $2$\\
\hline
$4$ & $4$ & $[i][\alpha][i][\alpha][(i\alpha)^2]=1$ & $11$& $3$& $5$& $9$& $6$\\
\hline
 & $8$ & $\begin{array}{c} [(23)][(34)][(234)]^2[(12)(34)]^2=\\ {[t_4]}[t_6][\alpha_2]^2[\beta_3^2]=1\end{array}$ (\ref{n3}) & $17$& $5$& $8$& $14$& $4$\\
\hline
 & $12$ & $[i][\alpha][i][\alpha][(i\alpha)^2]^3=1$ & $23$& $7$& $11$& $19$& $2$\\
\hline
$6$ & $0$ & $[i]^2[\alpha]^3=1$ & $13$& $3$& $6$& $9$& $6$\\
\hline
 & $4$ & $\begin{array}{c} [(12)][(34)][(12)(34)][(234)]^3=\\ {[t_1]}[t_6][\beta_3^2][\alpha_2]^3=1 \end{array} $(\ref{n3}) & $19$& $5$& $9$& $14$& $4$\\
\hline
 & $8$ & $[i]^2[\alpha]^3[(i\alpha)^2]^2=1$ & $25$& $7$& $12$& $19$& $2$\\
\hline
$8$ & $0$ & $\begin{array}{c} [(234)]^3[(132)][(13)][(12)]=\\ {[\alpha_2]^3}[\alpha_1^2][t_2][t_1]=1\end{array}$ & $21$& $5$& $10$& $14$& $4$\\
\hline
 & $4$ & $\begin{array}{c}[(243)]^4[(12)][(13)][(12)(34)]=\\ {[\alpha_2^2]}^4[t_1][t_2][\beta_3^2]=1\end{array}$ & $27$& $7$& $13$& $19$& $2$\\
\hline
$10$ & $0$ & $\begin{array}{c} [(234)]^3[(123)]^2[(13)][(12)]=\\ {[\alpha_2]^3}[\alpha_1]^2[t_2][t_1]=1\end{array}$ & $29$& $7$& $14$& $19$& $2$\\
\hline
\end{tabular}
\caption{Action of $S_4$ with $\boldsymbol{r=0}$, $\boldsymbol{t=4}$}
\label{S4t1}
\end{table}
}

\textbf{Notes:}
\begin{enumerate}
\item \label{n1} They generate since $\langle \alpha_1, \beta_1^2\rangle \cong A_4$ and there is no single transposition in this subgroup, so $\langle t_1, \alpha_1, \beta_1^2\rangle \cong S_4$.
\item \label{n3} They generate since $\langle (12), (234) \rangle \cong S_4$.
\end{enumerate}

{\setlength{\extrarowheight}{2pt}
\begin{table}[H]\centering 
\begin{tabular}{|c |c | c | c | c | c | c | c |}
\hline
$s$ & $k$ & branching data & $h$ & $g$& $b$& $p_a(\tilde{B})$ & $\tilde{B}^2$ \\
\hline
$2$ & $4$ & $[i\alpha][i][\alpha][(i\alpha)^2]=1$ & $6$& $2$& $3$& $6$& $7$\\
\hline
 & $8$ & $[i][i\alpha][\alpha^2][(i\alpha)^2]^2=1$ & $12$& $4$& $6$& $11$& $5$\\
\hline
 & $12$ & $[i\alpha][i][\alpha][(i\alpha)^2]^3=1$ & $18$& $6$& $9$& $16$& $3$\\
\hline
 & $16$ & $[i][i\alpha][\alpha^2][(i\alpha)^2]^4=1$& $24$& $8$& $12$& $21$& $1$\\
\hline
$4$ & $0$ & $[i][i\alpha][\alpha]^2=1$& $8$& $2$& $4$& $6$& $7$\\
\hline
 & $4$ & $[(i\alpha)^2][i\alpha][i][\alpha^2]^2=1$ & $14$& $4$& $7$& $11$& $5$\\
\hline
 & $8$& $[i][i\alpha][\alpha]^2[(i\alpha)^2]^2=1$ & $20$& $6$& $10$& $16$& $3$\\
\hline
 & $12$ & $[(i\alpha)^2][i\alpha][i][\alpha^2]^2[(i\alpha)^2]^2=1$& $26$& $8$& $13$& $21$& $1$\\
\hline
$6$& $0$ & $[i][i\alpha][\alpha][\alpha^2]^2=1$& $16$& $4$& $8$& $11$& $5$\\
\hline
 & $4$ & $[(i\alpha)^2][i\alpha][i][\alpha]^2[\alpha^2]=1$ & $22$& $6$& $11$& $16$& $3$\\
\hline
 & $8$ & $[(i\alpha)^2]^2[i][i\alpha][\alpha][(\alpha)^2]^2=1$ & $28$& $8$& $14$& $21$& $1$\\
\hline
$8$ & $0$ & $[i][i\alpha][\alpha]^3[\alpha^2]=1$ & $24$& $6$& $12$& $16$& $3$\\
\hline
 & $4$ & $[(i\alpha)^2][i\alpha][i][\alpha]^4=1$ & $30$& $8$& $15$& $21$& $1$\\
\hline
$10$ & $0$ & $[i][i\alpha][\alpha]^5=1$ & $32$& $8$& $16$& $21$& $1$\\
\hline
\end{tabular}
\caption{Action of $S_4$ with $\boldsymbol{r=2}$, $\boldsymbol{t=2}$}
\label{S4t2}
\end{table}
}

\subsection{Alternate group of degree $5$}

Let $\langle i, \alpha\rangle \cong A_5$, we recall that $|A_5|=60$.

We take $A_5$ embedded in $S_5$ as $i=(1\ 2)(4\ 5)$, $\alpha=(1\ 4\ 3)$ and $i\alpha=(1\ 5\ 4\ 3\ 2)$. Since $o(i\alpha)=5$, a prime number, we deduce that $i\alpha$ and $(i\alpha)^2$ have the same fixed points and therefore $k=0$. Moreover, $i\alpha^2i\alpha=(1\ 4\ 5\ 2\ 3)$ that is conjugated of $i\alpha$ or $(i\alpha)^2$, so $e=r$. 

Let $\gamma=g(D/A_5)$ and consider the morphism $D\rightarrow D/A_5$. By Lemma \ref{orbit} we obtain that
the points fixed by $i$ give $\frac{t}{2}$ branch points of order $2$, hence $t$ is even. Similarly, $s$ and $r$ are even. Therefore, the conditions $g\geq 2$ and $\tilde{B}^2>0$ translate into
\begin{equation}\label{condA5}
h-s\geq 4\text{ and }h\leq 2s+2r.
\end{equation}

Now, we consider the action of $A_5$ on the curve $D$. The group $A_5$ has four non identity conjugacy classes, those of $i$ (double transpositions), $\alpha$ (cycles of order three), $\beta=i\alpha$ (cycles of order $5$) and $\beta^2$, nevertheless, since $\beta$ and $\beta^2$ have the same fixed points, the Riemann-Hurwitz formula for $D\rightarrow D/A_5$ reads:
\[
 h=60\gamma -59+\frac{15}{2}t+10s+12r.
\]
Therefore, by the left hand side of \eqref{condA5} we obtain that 
\begin{align}
60\gamma +\frac{15}{2}t+8s+10r\leq 59 \Rightarrow \gamma=0\nonumber \\
\Rightarrow h=-59+\frac{15}{2}t+10s+12r.\label{B0A5}
\end{align}

By (\ref{B0A5}), the conditions in \eqref{condA5} translate into 
\begin{equation}\label{A5B}
\frac{15}{2}t+8s+10r\leq 59 \text{ and }
\frac{15}{2}t+9s+12r\geq 63.
\end{equation}

We are going to analyse all possible values for $r$, $s$ and $t$ satisfying the inequalities (\ref{A5B}). With this conditions we observe that we can discard the following cases $r=0$, $r=2$ and $(r,s,t)=(4,0,0)$

It remains only the possibility $r=4$. We describe in the following table the possible actions of $A_5$ on a curve $D$ with the ramification determined by the values of $r$, $s$ and $t$. 

We give the value of $s$ and $t$, the product one relation and the invariants of the curves $C$ and $\tilde{B}\subset C^{(2)}$. 

{\setlength{\extrarowheight}{2pt}
\begin{table}[H]\centering 
\begin{tabular}{|c |c | c | c | c | c | c | c |}
\hline
$s$ & $t$ & branching data & $h$ & $g$& $b$& $p_a(\tilde{B})$ & $\tilde{B}^2$ \\
\hline
$0$ & $2$ & $[(12345)][(13)(24)][(15234)]=1$ (\ref{n51}) & $4$& $2$& $2$& $4$& $5$\\
\hline
$2$ & $0$ & $[i \alpha][\alpha][\alpha i]=1$ & $9$& $3$& $5$& $7$& $4$\\
\hline
\end{tabular}
\caption{Action of $A_5$ with $\boldsymbol{r=4}$}
\label{A5t1}
\end{table}
}

\textbf{Note:}
\begin{enumerate}
\item \label{n51} These three elements generate $A_5$ because in $A_5$ an element of order two and one of order five can only generate $D_5$ or $A_5$, but since $(15234)\neq (12345)^j$ it cannot be $D_5$ (since all elements of order five in $D_5$ are a cyclic group).
\end{enumerate}

With this we finish the study of degree three curves with positive self-intersection defined by the action of a spherical triangular group on a curve $D$.


\bibliographystyle{alpha}
\bibliography{bib}

\begin{thebibliography}{MPP14}

\bibitem[FK80]{Far}
H.~M. Farkas and I.~Kra.
\newblock {\em Riemann surfaces}.
\newblock Springer-Verlag, New York, 1980.

\bibitem[Mil02]{Mi2}
G.~A. Miller.
\newblock Groups defined by the orders of two generators and the order of their
  product.
\newblock {\em American Journal of Mathematics}, 24:96--100, 1902.

\bibitem[MPP14]{MPP2}
M.~{Mendes Lopes}, R.~{Pardini}, and {G. P.} Pirola.
\newblock {Brill-Noether loci for divisors on irregular varieties}.
\newblock {\em Journal of the European Mathematical Society},
  16(10):2033--2057, 2014.

\bibitem[Pir95]{Pi2}
G.P. Pirola.
\newblock Abel-jacobi invariant and curves on generic abelian varieties.
\newblock In {\em Abelian varieties (Egloffstein, 1993)}, pages 237--249. de
  Gruyter, 1995.

\bibitem[S{\'a}e14]{MS3}
M.~S{\'a}ez.
\newblock {Classification of degree two curves in ${C}^{(2)}$ with positive
  self-intersection}.
\newblock {\em ArXiv e-prints}, 2014.

\bibitem[S{\'a}e15]{MS2}
M.~S{\'a}ez.
\newblock {Characterization of curves in ${C}^{(2)}$}.
\newblock {\em Collectanea mathematica}, DOI 10.1007/s13348-015-0147-x, 2015.

\end{thebibliography}

\end{document}